\def\section{\@startsection{section}{1}%
\z@{.7\linespacing\@plus\linespacing}{.5\linespacing}%
{\normalfont\bfseries\centering}}
\def\@settitle{\begin{center}%
  \baselineskip14\p@\relax
    \bfseries
    \LARGE\@title
  \end{center}%
}
\def\@setauthors{%
  \begingroup
  \trivlist
  \centering\footnotesize \@topsep30\p@\relax
  \advance\@topsep by -\baselineskip
  \item\relax
  \andify\authors
  \def\\{\protect\linebreak}%
 {\Large\authors}%
  \endtrivlist
  \endgroup
}
\def\maketitle{\par
  \@topnum\z@ 
  \@setcopyright
  \thispagestyle{firstpage}
  \ifx\@empty\shortauthors \let\shortauthors\shorttitle
  \else \andify\shortauthors
  \fi
  \@maketitle@hook
  \begingroup
  \@maketitle
  \toks@\@xp{\shortauthors}\@temptokena\@xp{\shorttitle}%
  \toks4{\def\\{ \ignorespaces}}
  \edef\@tempa{%
    \@nx\markboth{\the\toks4
      \@nx{\the\toks@}}{\the\@temptokena}}%
  \@tempa
  \endgroup
  \c@footnote\z@
  \def\do##1{\let##1\relax}%
  \do\maketitle \do\@maketitle \do\title \do\@xtitle \do\@title
  \do\author \do\@xauthor \do\address \do\@xaddress
  \do\email \do\@xemail \do\curraddr \do\@xcurraddr
  \do\commby \do\@commby
  \do\dedicatory \do\@dedicatory \do\thanks \do\thankses
  \do\keywords \do\@keywords \do\subjclass \do\@subjclass
}
\newtheorem{defi}{Definition}[section]
\newtheorem{lem}[defi]{Lemma}
\newtheorem{ass}[defi]{Assumption}
\newtheorem{prop}[defi]{Proposition}
\newtheorem{theo}[defi]{Theorem}
\newtheorem{cor}[defi]{Corollary}
\newtheorem{rk}[defi]{Remark}
\newcommand{\Exp}{\mathbb{E}}
\newcommand{\Var}{\mathrm{Var}}
\renewcommand{\Pr}{\mathbb{P}}
\newcommand{\R}{\mathbb{R}}
\newcommand{\N}{\mathbb{N}}
\newcommand{\dd}{\mathrm{d}}
\newcommand{\ind}[1]{\mathds{1}_{\{#1\}}}
\newcommand{\supp}{\mathrm{supp}}
\newcommand{\dist}{\mathrm{dist}}
\renewcommand{\hat}{\widehat}
\renewcommand{\tilde}{\widetilde}
\renewcommand{\bar}{\overline}
\renewcommand{\emptyset}{\varnothing}
\newcommand{\ie}{{\em i.e.}~}
\newcommand{\petito}{\mathrm{o}}
\newcommand{\grandO}{\mathrm{O}}
\newcommand{\QI}{\mathrm{QI}}
\newcommand{\NN}{\mathrm{NN}}
\title[Reweighting samples under covariate shift using a Wasserstein distance criterion]{Reweighting samples under covariate shift\\ using a Wasserstein distance criterion}
\thanks{This research work has been carried out under the leadership of the Technological Research Institute SystemX, and therefore granted with public funds within the scope of the French Program ”Investissements d’Avenir”.}
\author{Julien Reygner}
\author{Adrien Touboul}
\address{{\bf Julien Reygner}\newline
{\rm \indent CERMICS, Ecole des Ponts, Marne-la-Vallée, France}}
\email{\href{mailto:julien.reygner@enpc.fr}{julien.reygner@enpc.fr}}
\address{{\bf Adrien Touboul}\newline
{\rm \indent CERMICS, Ecole des Ponts, Marne-la-Vallée, France}
\newline
{\rm \indent IRT SystemX, Paris-Saclay, France }
}
\email{\href{mailto:adrien.touboul@enpc.fr}{adrien.touboul@enpc.fr}}
\keywords{Reweighting; Covariate shift; Wasserstein distance; Uncertainty Quantification; Nearest neighbor regression; Nearest neighbor distance}
\begin{document}
   
\begin{abstract}  
Considering two random variables with different laws to which we only have access through finite size i.i.d samples, we address how to reweight the first sample so that its empirical distribution converges towards the true law of the second sample as the size of both samples goes to infinity. We study an optimal reweighting that minimizes the Wasserstein distance between the empirical measures of the two samples, and leads to an expression of the weights in terms of Nearest Neighbors. The consistency and some asymptotic convergence rates in terms of expected Wasserstein distance are derived, and do not need the assumption of absolute continuity of one random variable with respect to the other. These results have some application in Uncertainty Quantification for decoupled estimation and in the bound of the generalization error for the Nearest Neighbor regression under covariate shift. 
\end{abstract}
\maketitle
\baselineskip=18pt

\section{Introduction}   

 
\subsection{Regression under covariate shift}\label{sec:statcov}

This article is dedicated to the study of a method aimed at approximating the law of a random variable
\begin{equation}\label{eq:Y}
  Y = f(X, \Theta),
\end{equation}
where $X \in \R^d$, $\Theta \in \mathbf{\Theta}$ are independent random variables, with respective laws denoted by $\mu_X$ and $\mu_\Theta$, and $f : \R^d \times \mathbf{\Theta} \to \R^e$ is a measurable function. The space $\mathbf{\Theta}$ is only assumed to be measurable. The specificity of the problem at stake is that we assume to be provided with:
\begin{itemize}
  \item a \emph{training} sample $(X'_j,Y'_j)_{j \in \llbracket 1,m \rrbracket}$ of i.i.d observations $Y'_j = f(X'_j,\Theta_j)$ where $\Theta_j$ has law $\mu_\Theta$ and is independent from $X'_j$, but the law $\mu_{X'}$ of $X'_j$ may differ from $\mu_X$;
  \item an \emph{evaluation} sample $(X_i)_{i \in \llbracket 1,n \rrbracket}$ with i.i.d observations distributed according to $\mu_X$.
\end{itemize}
This situation is known as \emph{covariate shift} in the statistical learning literature~\cite{braun1995development,quionero2009dataset}. 

This problem is motivated by the study of decomposition-based uncertainty quantification (UQ) methods in complex industrial systems, as is detailed in Subsection~\ref{sec:appUQ} below. In this context, the overall objective is to approximate a \emph{quantity of interest} of the form
\begin{equation}\label{eq:QI}
  \QI =  \Exp[\phi(Y)]
\end{equation}
for some function $\phi: \R^e \to \R$. Following previous works in this direction~\cite{amaral2012decomposition,amaral2014decomposition,amaral2017optimal}, our estimator of $\QI$ assumes the form
\begin{equation}\label{eq:hatQI}
  \hat{\QI}_{m,n} = \frac{1}{m}\sum_{j=1}^m w_j \phi(Y'_j),
\end{equation}
where the vector of \emph{weights} $\mathbf{w}_m = (w_1, \ldots, w_m)$ is chosen so that the \emph{weighted empirical measure}
\begin{equation*}
  \hat{\mu}_{\mathbf{X}'_m}^{\mathbf{w}_m} := \frac{1}{m} \sum_{j=1}^m w_j \delta_{X'_j}
\end{equation*}
of the training sample $\mathbf{X}'_m = (X'_1, \ldots, X'_m)$ be close, in a sense which will be made precise below, to the empirical measure
\begin{equation*}
  \hat{\mu}_{\mathbf{X}_n} := \frac{1}{n} \sum_{i=1}^n \delta_{X_i}
\end{equation*}
of the evaluation sample $\mathbf{X}_n = (X_1, \ldots, X_n)$. Such a reweighting procedure is a standard approach to the problem of \emph{density ratio estimation} in the statistical learning literature~\cite{sugiyama2012density}, the purpose of which is to estimate the density $\dd\mu_X/\dd\mu_{X'}$ from the samples $\mathbf{X}_n$ and $\mathbf{X'}_m$, without estimating separately the measures $\mu_X$ and $\mu_{X'}$. While the theoretical analysis of such methods almost always requires this density to exist, in the UQ context which motivates the present study it is desirable not to assume that any of the measures $\mu_X$ and $\mu_{X'}$ be absolutely continuous with respect to the other, see in particular Remark~\ref{rk:abscont}.

The first step of our work is thus the computation of \emph{optimal} weights $\mathbf{w}_m$ for the problem
\begin{align}
  &\min_{\mathbf{w}_m} W_q\left(\hat{\mu}_{\mathbf{X}_n}, \hat{\mu}_{\mathbf{X}'_m}^{\mathbf{w}_m}\right),\label{eq:wassopt}\\
  &\forall j \in \llbracket1,m\rrbracket, \quad w_j \geq 0, \qquad \text{and} \quad \sum_{j=1}^m w_j = m,\label{eq:condw}
\end{align}
where $W_q$ denotes the \emph{Wasserstein distance} of order $q \geq 1$ on $\R^d$. The reason for the choice of this distance is that unlike criteria already studied in the density ratio estimation literature, such as moment/kernel matching, $L^2$ distance, Kullback--Leibler divergence (see~\cite{sugiyama2012density} and the references therein), it is not sensitive to absolute continuity conditions and therefore it is well suited to our UQ motivation. On the other hand, this choice makes the problem closely related to the fields of \emph{optimal quantization}~\cite{graf2007foundations} and \emph{Nearest Neighbor} (NN) estimation~\cite{biau2015lectures}. 

More precisely, for any $k \in \llbracket 1, m\rrbracket$, denote by $\hat{\psi}^{(k)}_m(x)$ the $k$-NN estimator of the \emph{regression function}
\begin{equation}\label{def:psi}
  \psi(x) := \Exp\left[\phi(Y)|X=x\right] = \Exp\left[\phi(f(x,\Theta))\right],
\end{equation}
defined from the observation of the training sample, and then consider the Monte Carlo estimator
\begin{equation*}
  \hat{\QI}^{(k)}_{m,n} = \frac{1}{n} \sum_{i=1}^n \hat{\psi}^{(k)}_m(X_i)
\end{equation*}
of $\QI$. Then the vector of weights $\mathbf{w}_m$ which are optimal for~\eqref{eq:wassopt}--\eqref{eq:condw} turns out not to depend on the value of $q$, and the associated estimator $\hat{\QI}_{m,n}$ defined by~\eqref{eq:hatQI} coincides with the $1$-NN estimator $\hat{\QI}^{(1)}_{m,n}$. For this reason, we shall denote by $\mathbf{w}_m^{(1)}$ the vector of optimal weights for~\eqref{eq:wassopt}--\eqref{eq:condw}, and more generally by $\mathbf{w}_m^{(k)}$ the vector of weights induced by the $k$-NN estimator of $\psi$.

The main results of this paper describe the asymptotic behavior, as the respective sizes $m$ and $n$ of the training and evaluation samples grow to infinity, of both the Wasserstein distance $W_q(\hat{\mu}_{\mathbf{X}_n},\hat{\mu}_{\mathbf{X}'_m}^{\mathbf{w}^{(k)}_m})$ and the estimator $\hat{\QI}^{(k)}_{m,n}$ of $\QI$. While taking $k=1$ is optimal for the convergence of $\hat{\mu}_{\mathbf{X}'_m}^{\mathbf{w}^{(k)}_m}$ to $\hat{\mu}_{\mathbf{X}_n}$, one may expect from the theory of NN regression that the estimator $\hat{\QI}^{(k)}_{m,n}$ display better convergence properties if $k$ is chosen to grow to infinity with $m$. Therefore we shall study both regimes $k=1$ and $k=k_m \to +\infty$.

\subsection{Outline of the article} The derivation of the Wasserstein optimal weights $\mathbf{w}_m$ is detailed in Section~\ref{sec:wassnn}, where we also highlight connections between our results and various topics in numerical probability and statistical learning. The asymptotic behavior of $W_q(\hat{\mu}_{\mathbf{X}_n},\hat{\mu}_{\mathbf{X}'_m}^{\mathbf{w}^{(k)}_m})$ and $\hat{\QI}^{(k)}_{m,n}$ are respectively studied in Sections~\ref{sec:convan} and~\ref{sec:discussion}. Applications to decomposition-based UQ and the generalization error for NN regression under covariate shift, as well as numerical illustrations, are presented in Section~\ref{sec:appli}.

\subsection{Notation}

We denote by $\N$  the set of the natural integers including zero and by $\N^{*} = \N \setminus \left\{ 0\right\}$ the set of positive integers. Given two integers $n_1 \leq n_2$, the set of the integers between $n_1$ and $n_2$ is written $\llbracket n_1,n_2 \rrbracket = \left\{n_1, \ldots, n_2 \right\}$. For $x \in \R$, $\lceil x \rceil$ (resp. $\lfloor x \rfloor$) is the unique integer verifying $x \leq \lceil x  \rceil < x+1$ (resp.  $x -1 < \lfloor x  \rfloor \leq x$). For  $(x,y) \in \R^{2}$, we use the join and meet notation $x \wedge y = \min(x,y)$ and $x\vee y = \max(x,y)$. Last, we denote by $(x)_+ := 0 \vee x$ and $(x)_- := 0 \vee (-x)$ the nonnegative and nonpositive parts of $x \in \R$.

We fix a norm $|\cdot|$ on $\R^d$, which need not be the Euclidean norm. The supremum norm of $\phi: \R^{d} \rightarrow \R$ is denoted by $\left\| \phi \right\|_{\infty} = \sup_{x \in \R^{d}}|\phi(x)|$. The distance between a point $x \in \R^d$ and a subset $A \subset \R^d$ is denoted by $\dist(x,A)$. Last, for all $x \in \R^d$ and $r \geq 0$, we denote $B(x,r):=\{x' \in \R^d: |x-x'| \leq r\}$, and recall that the \emph{support} of a probability measure $\nu  \in \mathcal{P}(\R^d)$ is defined by
\begin{equation*}
  \supp(\nu) := \left\{x \in \R^{d}  : \forall r > 0,  \nu(B(x,r)) > 0   \right\}.
\end{equation*}

\section{Wasserstein distance minimization and NN regression} \label{sec:wassnn}

\subsection{Optimal weights for Wasserstein distances} We begin by recalling the definition of the Wasserstein distance. 
\begin{defi}[Wasserstein distance]\label{defi:wass}
Let $\mathcal{P}(\R^{d})$ be the set of probability measures on $ \R^{d}$ and, for any $q \in [1,+\infty)$, let 
\begin{equation*}
  \mathcal{P}_{q}( \R^{d}) = \left\{ \nu \in  \mathcal{P}( \R^{d}): \int_{ \R^{d}} |x|^{q}\dd \nu(x) <+ \infty \right\}.
\end{equation*}
The Wasserstein distance of order $q$ between $\mu$ and $\nu \in\mathcal{P}_{q}( \R^{d}) $
is defined as
\begin{equation*}
  W_{q}(\mu,\nu) = \inf \left\{ \int_{ \R^{d} \times  \R^{d}} |x-x'|^{q} \dd\gamma(x,x'): \gamma \in \Pi(\mu,\nu) \right\}^{1/q},
\end{equation*}
where $\Pi(\mu,\nu)$ is the set of probability measures on $ \R^{d} \times  \R^{d}$ with marginals $\mu$ and $\nu$. 
\end{defi}
We refer to~\cite[Section 6]{villani2008optimal} for a general introduction to Wasserstein distances. 

This definition allows for an explicit resolution of the minimization problem~\eqref{eq:wassopt}--\eqref{eq:condw}, which relies on the notion of \emph{Nearest Neighbor} (NN). For  $x \in \R^d$ and $k \in \llbracket 1, m\rrbracket$, we denote by $\NN^{(k)}_{\mathbf{X}'_m}(x)$ the \emph{$k$-th Nearest Neighbor} ($k$-NN) of $x$ among the sample $\mathbf{X}'_m$, that is to say the $k$-th closest point to $x$ among $X'_1, \ldots, X'_m$ for the norm $|\cdot|$. If there are several such points, we define $\NN^{(k)}_{\mathbf{X}'_m}(x)$ to be the point $X'_j$ with lowest index $j$. We omit the superscript notation $(k)$ when referring to the $1$-NN, \ie
\begin{equation*}
\NN_{\mathbf{X}'_m}(x) = \NN^{(1)}_{\mathbf{X}'_m}(x).
\end{equation*}
In the next statement, for any $i \in \llbracket1, n\rrbracket$ and $l \in \llbracket 1, m\rrbracket$, we denote by $j^{(l)}_i$ the (lowest) index $j$ such that $X'_j = \NN^{(l)}_{\mathbf{X}'_m}(X_i)$.
\begin{prop}[Optimal vector of weights]\label{prop:wopt}
 Let the $k$-NN vector of weights $\mathbf{w}_{m}^{(k)} =  (w^{(k)}_1, \ldots, w^{(k)}_m)$ be defined by, for all $j,k \in \llbracket1,m\rrbracket$,  
 \begin{equation}\label{eq:wk}
    w^{(k)}_j := \frac{m}{kn}\sum_{i=1}^n \sum_{ l=1}^{k} \ind{j = j^{(l)}_i}.
  \end{equation}  
  The vector $\mathbf{w}_m^{(k)}$ satisfies~\eqref{eq:condw} and verifies, for all $q \in [1,+\infty)$,
  \begin{equation}
  \label{eq:upboundknn}
    W_q^q\left(\hat{\mu}_{\mathbf{X}_n},\hat{\mu}_{\mathbf{X}'_m}^{\mathbf{w}^{(k)}_m}\right) \leq \frac{1}{kn}\sum_{i=1}^n  \sum_{l=1}^{k} \left|X_i - \NN^{(l)}_{\mathbf{X}'_m}(X_i)\right|^q.
  \end{equation}
  For $k=1$, the equality is reached 
  \begin{equation}\label{eq:upbound1nn}
    W_q^q\left(\hat{\mu}_{\mathbf{X}_n},\hat{\mu}_{\mathbf{X}'_m}^{\mathbf{w}^{(1)}_m}\right) = \frac{1}{n}\sum_{i=1}^n  \left|X_i - \NN_{\mathbf{X}'_m}(X_i)\right|^q,
  \end{equation}
 and the vector is optimal for~\eqref{eq:wassopt} in the sense that for any $\mathbf{w}_m = (w_1, \ldots, w_m)$ which also satisfies~\eqref{eq:condw}, we have 
  \begin{equation}\label{eq:opt1nn}
    W_q\left(\hat{\mu}_{\mathbf{X}_n},\hat{\mu}_{\mathbf{X}'_m}^{\mathbf{w}^{(1)}_m}\right) \leq W_q\left(\hat{\mu}_{\mathbf{X}_n},\hat{\mu}_{\mathbf{X}'_m}^{\mathbf{w}_m}\right).
  \end{equation}
\end{prop}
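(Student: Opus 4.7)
The plan is to prove the three assertions in order: normalization (\ref{eq:condw}), the upper bound (\ref{eq:upboundknn}), and the equality/optimality (\ref{eq:upbound1nn})--(\ref{eq:opt1nn}) for $k=1$. The normalization is immediate by swapping the order of summation in the defining formula \eqref{eq:wk}: I would get
\[
  \sum_{j=1}^m w_j^{(k)} = \frac{m}{kn} \sum_{i=1}^n \sum_{l=1}^k \sum_{j=1}^m \ind{j = j_i^{(l)}} = \frac{m}{kn} \cdot n \cdot k = m,
\]
since for each $(i,l)$ there is exactly one $j$ with $j = j_i^{(l)}$.

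For the upper bound \eqref{eq:upboundknn}, the idea is to exhibit an explicit transport plan and then invoke the definition of the Wasserstein distance as an infimum. The natural candidate is
\[
  \gamma^{(k)} := \frac{1}{kn}\sum_{i=1}^n \sum_{l=1}^k \delta_{(X_i,\NN^{(l)}_{\mathbf{X}'_m}(X_i))},
\]
which spreads each atom $X_i$'s mass $1/n$ evenly across its $k$ nearest neighbors in $\mathbf{X}'_m$. The first marginal recovers $\hat{\mu}_{\mathbf{X}_n}$ by construction, and the second marginal places mass $\frac{1}{kn}\sum_{i,l}\ind{j=j_i^{(l)}} = w_j^{(k)}/m$ at $X'_j$, matching $\hat{\mu}_{\mathbf{X}'_m}^{\mathbf{w}_m^{(k)}}$. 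Integrating $|x-x'|^q$ against $\gamma^{(k)}$ yields exactly the right-hand side of \eqref{eq:upboundknn}.

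The equality and optimality for $k=1$ both come from a single matching lower bound: whenever $\mathbf{w}_m$ satisfies \eqref{eq:condw}, the measure $\hat{\mu}_{\mathbf{X}'_m}^{\mathbf{w}_m}$ is supported in $\{X'_1,\dots,X'_m\}$, so any $\gamma \in \Pi(\hat{\mu}_{\mathbf{X}_n},\hat{\mu}_{\mathbf{X}'_m}^{\mathbf{w}_m})$ satisfies $|x-x'|^q \geq |x - \NN_{\mathbf{X}'_m}(x)|^q$ $\gamma$-a.e., hence
\[
  \int |x-x'|^q \dd\gamma(x,x') \geq \int |x-\NN_{\mathbf{X}'_m}(x)|^q \dd\hat{\mu}_{\mathbf{X}_n}(x) = \frac{1}{n}\sum_{i=1}^n |X_i - \NN_{\mathbf{X}'_m}(X_i)|^q.
\]
Taking the infimum over $\gamma$ gives a universal lower bound that (i) matches the upper bound \eqref{eq:upboundknn} for $k=1$, proving \eqref{eq:upbound1nn}, and (ii) applies to every admissible $\mathbf{w}_m$, proving the optimality \eqref{eq:opt1nn}. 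I do not expect a real obstacle here; the whole argument reduces to the observation that transporting each $X_i$ to its closest $X'_j$ minimizes the cost pointwise, once one notices that the second marginal is always atomic on the fixed finite set $\{X'_1,\dots,X'_m\}$. The mildly delicate point is checking that the coupling $\gamma^{(k)}$ indeed produces the advertised second marginal, which is a matter of bookkeeping with the indicator sums in \eqref{eq:wk}.
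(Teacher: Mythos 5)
Your proof is correct and follows essentially the same route as the paper: you exhibit the same $k$-NN transport plan (written as a sum of Dirac masses rather than as a matrix $\gamma_{i,j}$) to get the upper bound, and you derive the $k=1$ equality and optimality from the same pointwise lower bound $|x-x'|^q \geq |x-\NN_{\mathbf{X}'_m}(x)|^q$ valid for any coupling with second marginal supported on $\{X'_1,\dots,X'_m\}$. The explicit verification of the normalization $\sum_j w_j^{(k)}=m$ is a minor addition the paper leaves implicit; otherwise the arguments coincide.
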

In other words, for a given $j \in \llbracket 1, m\rrbracket$, $w^{(k)}_j$ is proportional to the number of points $X_i$ of which $X'_j$ is one of the first $k$ NN. We refer to~\cite{loog2012nearest} for a numerical illustration of the use of the vector of weights $\mathbf{w}^{(1)}_{m}$ in the context of classification under covariate shift.
\begin{proof}
  For a general vector of weights $\mathbf{w}_m=(w_1, \ldots, w_m)$ which satisfies~\eqref{eq:condw}, the Wasserstein distance $W^q_q(\hat{\mu}_{\mathbf{X}_n},\hat{\mu}_{\mathbf{X}'_m}^{\mathbf{w}_m})$ is the solution of the following optimal transport problem
\begin{equation}
\label{eq:generaexp}
\begin{split}
 &\inf_{(\gamma_{i,j})_{(i,j) \in\llbracket1,n \rrbracket \times \llbracket1,m \rrbracket}} \sum_{i=1}^{n} \sum_{j=1}^{m}\gamma_{i,j}|X_{i} -X'_{j}|^{q}, \\ 
 &\forall i \in \llbracket1, n\rrbracket, \quad \sum_{j=1}^{m}\gamma_{i,j} = \frac{1}{n} \quad \text{(marginal condition on $\hat{\mu}_{\mathbf{X}_n}$)},\\
  &\forall j \in \llbracket 1,m\rrbracket, \quad \sum_{i=1}^{n}\gamma_{i,j} = \frac{w_{j}}{m} \quad \text{(marginal condition on $\hat{\mu}_{\mathbf{X}'_m}^{\mathbf{w}_m}$)}, \\
  & \forall (i,j) \in \llbracket 1,n\rrbracket \times \llbracket 1,m\rrbracket, \quad \gamma_{i,j} \geq 0,
 \end{split}
\end{equation} 
where $\gamma_{i,j}$ is the coefficient of the discrete transport plan between $\delta_{X_{i}} $ and $\delta_{X'_{j}}$.  For the $k$-NN vector of weights $\mathbf{w}^{(k)}_{m}$ defined by~\eqref{eq:wk}, the transport plan

\begin{equation*}
  \gamma_{i,j}^{(k)} = \frac{1}{kn}\sum_{ l=1}^{k} \ind{j = j^{(l)}_i}
\end{equation*}
satisfies the two marginal conditions. Reordering the terms in the associated cost gives the upper bound of Equation~\eqref{eq:upboundknn}.

We now prove the equality~\eqref{eq:upbound1nn} and optimality~\eqref{eq:opt1nn} of $\mathbf{w}^{(1)}_{m}$ at the same time. On the one hand, it is clear that for any vector of weights $\mathbf{w}_m=(w_1, \ldots, w_m)$ and any transport plan $(\gamma_{i,j})_{(i,j) \in\llbracket1,n \rrbracket \times \llbracket1,m \rrbracket}$ between $\hat{\mu}_{\mathbf{X}_n}$ and $\hat{\mu}_{\mathbf{X}'_m}^{\mathbf{w}_n}$, we have
\begin{align*}
  \sum_{i=1}^n \sum_{j=1}^m\gamma_{i,j}|X_i -X'_j|^q &\geq \sum_{i=1}^n \sum_{j=1}^m\gamma_{i,j}|X_i -\NN_{\mathbf{X}'_m}(X_i)|^q\\
  & = \frac{1}{n}\sum_{i=1}^n |X_i -\NN_{\mathbf{X}'_m}(X_i)|^q,
\end{align*}
therefore taking the infimum over all transport plans yields
\begin{equation*}
  W_q^q\left(\hat{\mu}_{\mathbf{X}_n},\hat{\mu}_{\mathbf{X}'_m}^{\mathbf{w}_m}\right) \geq \frac{1}{n}\sum_{i=1}^n |X_i -\NN_{\mathbf{X}'_m}(X_i)|^q.
\end{equation*}
On the other hand, taking $\mathbf{w}_m=\mathbf{w}^{(1)}_m$ in the left-hand side and combining this inequality with~\eqref{eq:upboundknn} for $k=1$, we obtain both the equality~\eqref{eq:upbound1nn} and optimality~\eqref{eq:opt1nn}.
\end{proof}

\begin{rk}
  In order to alleviate notation, we now write $\hat{\mu}_{\mathbf{X}'_m}^{(k)}=\hat{\mu}_{\mathbf{X}'_m}^{\mathbf{w}^{(k)}_m}$.
\end{rk}

\subsection{Comments on Proposition~\ref{prop:wopt}} In this subsection, we discuss the relation between the result of Proposition~\ref{prop:wopt} and other fields in numerical probability and statistical learning, as well a generalization of this result to a more general framework.

\subsubsection{Link with optimal quantization} It is clear from Proposition~\ref{prop:wopt} that $\hat{\mu}_{\mathbf{X}'_m}^{(1)}$ is the pushforward of $\hat{\mu}_{\mathbf{X}_n}$ by $\NN_{\mathbf{X}'_m}$, and that this transport map yields an optimal coupling between $\hat{\mu}_{\mathbf{X}_n}$ and $\hat{\mu}_{\mathbf{X}'_m}^{(1)}$ in Definition~\ref{defi:wass}, for any $q \geq 1$. The idea to associate each $X_i$ with $\NN_{\mathbf{X}'_m}(X_i) = X'_{j^{(1)}_i}$ is the basis of the theory of \emph{optimal quantization}~\cite{graf2007foundations,Klo12,Pag15}. In this context, the sample $\mathbf{X}'_m$ plays the role of the \emph{quantization grid}, and $\NN_{\mathbf{X}'_m}$ is known to be the \emph{optimal quantization function}. The right-hand side of~\eqref{eq:upbound1nn} then corresponds to the \emph{$L^q$ mean quantization error} induced by the grid $\mathbf{X}'_m$ for the measure $\hat{\mu}_{\mathbf{X}_n}$.

\subsubsection{Link with geometric inference} When $q=2$, the right-hand side of~\eqref{eq:upboundknn} rewrites
\begin{equation*}
  \int_{x \in \R^d} \mathrm{d}^2_{\hat{\mu}_{\mathbf{X}'_m},k/m}(x)\hat{\mu}_{\mathbf{X}_n}(\dd x),
\end{equation*}
where $\mathrm{d}_{\mu,\alpha}(\cdot)$ is the \emph{distance function to $\mu$ with parameter $\alpha$} introduced by Chazal, Cohen-Steiner and Mérigot in~\cite[Definition~3.2]{ChaCohMer11} in order to perform geometric and topological inference for set estimation, see also~\cite{ChaMasMic16,BreLev20} for robust inference. In particular, \cite[Proposition~3.3]{ChaCohMer11} shows that for any $x \in \R^d$,
\begin{equation}\label{eq:chazal}
  \begin{aligned}
    &\mathrm{d}^2_{\hat{\mu}_{\mathbf{X}'_m},k/m}(x) = \inf_{\mathbf{w}_m} W_2^2\left(\frac{1}{m}\sum_{j=1}^m w_j\delta_{X'_j},\delta_x\right),\\
    & \text{$\mathbf{w}_m = (w_1, \ldots, w_m)$ satisfies~\eqref{eq:condw} and $w_j \leq m/k$ for all $j$.}
  \end{aligned}
\end{equation}
This result may be directly compared with the estimates~\eqref{eq:upboundknn} and~\eqref{eq:upbound1nn}, at least in the case where $\mu_X=\delta_x$. In this case, if $k=1$, then the supplementary constraint $w_j \leq m/k$ is necessarily implied by~\eqref{eq:condw} and therefore, combining~\eqref{eq:chazal} with~\eqref{eq:upbound1nn}, we recover the optimality result~\eqref{eq:opt1nn}. For arbitrary values of $k$, the combination of~\eqref{eq:chazal} with~\eqref{eq:upboundknn} shows that the vector $\mathbf{w}_m^{(k)}$ need not be optimal for~\eqref{eq:opt1nn}, but yields a solution which is lower than any solution with the supplementary constraint that $w_j \leq m/k$.

\subsubsection{A more general problem} Proposition~\ref{prop:wopt} may appear as a specific instance, restricted to empirical measures, of the following problem: given two probability measures $\mu$ and $\nu$ on $\R^d$, and assuming that $\mu \in \mathcal{P}_q(\R^d)$, compute the infimum of $W_q(\mu,\rho\dd\nu)$ over all probability densities $\rho$ with respect to $\nu$. Similar questions were recently addressed in~\cite{ButCarLab20}. First, it is clear that if $\mu$ is absolutely continuous with respect to $\nu$, then taking $\rho = \dd\mu/\dd\nu$ shows that this minimum is $0$. Next, following the proof of Proposition~\ref{prop:wopt}, it is easily seen that for any $\rho$,
\begin{equation*}
  W_q^q(\mu,\rho\dd\nu) \geq \Exp\left[\dist(X,\supp(\nu))^q\right], \qquad X \sim \mu.
\end{equation*}
To show that the right-hand side actually matches with the infimum of the left-hand side when $\rho$ varies, we keep following the proof of Proposition~\ref{prop:wopt}. Since $\supp(\nu)$ is closed, for any $x \in \R^d$ the set $\Psi(x) := \{x' \in \supp(\nu): |x-x'| = \dist(x,\supp(\nu))\}$ is nonempty and closed. Besides, the multifunction $\Psi$ is weakly measurable\footnote{Let us denote $S = \supp(\nu)$ and fix $U \subset \R^d$ an open set, which we write as the countable union of closed sets $(F_n)_{n \geq 1}$. Then $\{x \in \R^d: \Psi(x) \cap U \not= \emptyset \} = \cup_{n \geq 1} \{x \in \R^d: \dist(x,(S \cap F_n)) = \dist(x,S)\}$ and it is easily seen that each set in the right-hand side is measurable.}, therefore by the Kuratowski--Ryll-Nardzewski theorem it admits a measurable selection which we denote by $\mathrm{nn}_\nu$. We denote by $\mu^*$ the associated pushforward of $\mu$ by $\mathrm{nn}_\nu$. Then it is clear that $\supp(\mu^*) \subset \supp(\nu)$ on the one hand, and that
\begin{equation*}
  W_q^q(\mu,\mu^*) \leq \Exp\left[\dist(X,\supp(\nu))^q\right], \qquad X \sim \mu,
\end{equation*}
on the other hand. We finally deduce from the approximation result stated in Lemma~\ref{lem:approx} below that
\begin{equation*}
  \inf_\rho W_q^q(\mu,\rho\dd\nu) = W_q^q(\mu,\mu^*) = \Exp\left[\dist(X,\supp(\nu))^q\right], \qquad X \sim \mu,
\end{equation*}
which thereby generalizes the results of Proposition~\ref{prop:wopt}. Notice that there may not exist a minimizer $\rho$ for this problem as the measure $\mu^*$ need not be absolutely continuous with respect to $\nu$.

\begin{lem}[$W_q$ approximation by absolutely continuous measures]\label{lem:approx}
  Let $\mu^*$ and $\nu$ be two probability measures on $\R^q$ such that $\supp(\mu^*) \subset \supp(\nu)$. For any $\epsilon > 0$, there exists a probability density $\rho$ with respect to $\nu$ such that, for all $q \geq 1$, $W_q(\mu^*,\rho\dd\nu) < \epsilon$.
\end{lem}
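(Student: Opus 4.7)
The plan is to regularize $\mu^*$ by locally averaging its mass against $\nu$-balls of small radius. Fix $r \in (0, \epsilon)$. Since $\supp(\mu^*) \subset \supp(\nu)$ and $\mu^*$ is concentrated on $\supp(\mu^*)$, for $\mu^*$-almost every $x$ the ball $B(x,r)$ has positive $\nu$-mass, and we may form the Markov kernel
\[
\kappa_r(x, \dd y) := \frac{\ind{y \in B(x,r)}}{\nu(B(x,r))} \dd\nu(y).
\]
The candidate density with respect to $\nu$ is
\[
\rho(y) := \int_{\R^d} \frac{\ind{y \in B(x,r)}}{\nu(B(x,r))} \dd\mu^*(x),
\]
and a direct Fubini computation gives $\int \rho \dd\nu = 1$, so $\rho \dd\nu$ is a probability measure absolutely continuous with respect to $\nu$.

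Next, I would exhibit an explicit coupling between $\mu^*$ and $\rho \dd\nu$ and read off the Wasserstein bound. Define $\gamma$ on $\R^d \times \R^d$ by $\gamma(\dd x, \dd y) := \mu^*(\dd x) \kappa_r(x, \dd y)$. Its first marginal is $\mu^*$ by construction, and by the definition of $\rho$ its second marginal equals $\rho \dd\nu$. Moreover, $\gamma$ is supported on $\{(x,y) : |x-y| \leq r\}$, so for every $q \geq 1$,
\[
W_q^q(\mu^*, \rho \dd\nu) \leq \int |x-y|^q \dd\gamma(x,y) \leq r^q,
\]
and the choice $r < \epsilon$ yields $W_q(\mu^*, \rho \dd\nu) \leq r < \epsilon$ uniformly in $q$. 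The same density $\rho$ thus serves for all orders simultaneously.

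The main technical obstacle is to ensure that $\rho$ is well defined as a Borel function, given that $\nu$ is an arbitrary probability measure with no regularity. Since $\{(x,y) : |x-y| \leq r\}$ is closed in $\R^d \times \R^d$, the function $x \mapsto \nu(B(x,r))$ is upper semicontinuous (apply Fatou's lemma to the indicator), hence Borel. By the defining property of $\supp(\nu)$, this function is strictly positive on $\supp(\mu^*)$; since $\mu^*$ assigns full mass to $\supp(\mu^*)$, we may redefine $\rho$ to be $0$ on the $\mu^*$-null set $\R^d \setminus \supp(\mu^*)$ without affecting any of the preceding identities. No regularity of $\mu^*$ or $\nu$ beyond the support inclusion is required.
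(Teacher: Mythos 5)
Your proof is correct and is essentially the paper's own argument: the paper draws $X\sim\mu^*$ and then $Y$ from the conditional law $\dd\nu(\cdot\mid B(X,\epsilon))$, which is exactly your Markov-kernel coupling $\gamma(\dd x,\dd y)=\mu^*(\dd x)\kappa_r(x,\dd y)$, yielding the same density $\rho$ and the bound $W_q\leq r<\epsilon$. Your added measurability check for $x\mapsto\nu(B(x,r))$ is a welcome detail the paper leaves implicit.
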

\begin{proof}
  Let $\epsilon>0$ and $X \in \R^d$ be a random variable with law $\mu^*$. Almost surely, $X \in \supp(\mu^*) \subset \supp(\nu)$ and therefore $\nu(B(X,\epsilon))>0$, which allows to draw $Y$ with conditional distribution 
  \begin{equation*}
    \dd\nu(y|B(X,\epsilon)):=\ind{y \in B(X,\epsilon)}\frac{\dd\nu(y)}{\nu(B(X,\epsilon))}.
  \end{equation*}

  On the one hand, the random variable $Y$ has density
  \begin{equation*}
    \rho(y) := \int_{x \in \R^d} \ind{y \in B(x,\epsilon)}\frac{\dd\mu^*(x)}{\nu(B(x,\epsilon))}
  \end{equation*}
  with respect to $\nu$, and on the other hand we have $|X-Y| < \epsilon$, almost surely, which ensures that $W_q(\mu^*,\rho\dd\nu) < \epsilon$ for any $q \geq 1$.
\end{proof}

\section{Analysis of the Wasserstein distance \texorpdfstring{$W_q(\hat{\mu}_{\mathbf{X}_n},\hat{\mu}_{\mathbf{X}'_m}^{(k)})$}{}} \label{sec:convan} 

In this section, we study the asymptotic behavior of $\Exp[W^q_q(\hat{\mu}_{\mathbf{X}_n},\hat{\mu}_{\mathbf{X}'_m}^{(k)})]$ when $n, m \to +\infty$. To this aim, we first notice that by Proposition~\ref{prop:wopt}, we have
\begin{equation}\label{eq:ExpWqq-1}
  \begin{aligned}
    \Exp\left[W_q^q\left(\hat{\mu}_{\mathbf{X}_n}, \hat{\mu}_{\mathbf{X}'_m}^{(1)}\right)\right] &= \Exp\left[\frac{1}{n}\sum_{i=1}^n \left|X_i - \NN_{\mathbf{X}'_m}(X_i)\right|^q\right]\\
    & = \Exp\left[\left|X - \NN_{\mathbf{X}'_m}(X)\right|^q\right],
  \end{aligned}
\end{equation}
for $k=1$, and
\begin{equation}\label{eq:ExpWqq-k}
  \begin{aligned}
    \Exp\left[W_q^q\left(\hat{\mu}_{\mathbf{X}_n}, \hat{\mu}_{\mathbf{X}'_m}^{(k)}\right)\right] &\leq \Exp\left[\frac{1}{kn}\sum_{i=1}^n \sum_{l=1}^{k}\left|X_i - \NN^{(l)}_{\mathbf{X}'_m}(X_i)\right|^q\right]\\
    &= \frac{1}{k}\sum_{l=1}^{k} \Exp\left[\left|X - \NN^{(l)}_{\mathbf{X}'_m}(X)\right|^q\right],
  \end{aligned}
\end{equation}
for $k \geq 2$. Observe that the right-hand sides of both~\eqref{eq:ExpWqq-1} and~\eqref{eq:ExpWqq-k} no longer depend on $n$.

\subsection{Consistency} \label{sec:consistency}  Our first main result is a consistency result. Before stating it in Theorem~\ref{theo:consist}, we formulate two crucial assumptions. 

\begin{ass}[Support condition]\label{ass:supp}
  We have $\supp(\mu_X) \subset \supp(\mu_{X'})$.
\end{ass}

\begin{ass}[Min-integrability]\label{ass:integ}
  There exists an integer $m_0 \geq 1$ such that 
  \begin{equation*}
    \Exp\left[\min_{ j \in \llbracket1 ,m_0\rrbracket} |X'_j|\right] < +\infty.
  \end{equation*}
\end{ass}

\begin{theo}[Consistency]\label{theo:consist}
  Let Assumptions~\ref{ass:supp} and~\ref{ass:integ} hold. For all $q \in [1, +\infty)$ such that $\Exp[|X|^q] < +\infty$, and any sequence of positive integers $(k_m)_{m \geq 1}$ such that $k_m / m \to 0$ when $m \to +\infty$, we have
  \begin{equation*}
    \lim_{m \to +\infty} \Exp\left[W_q^q\left(\hat{\mu}_{\mathbf{X}_n}, \hat{\mu}_{\mathbf{X}'_m}^{(k_{m})}\right)\right] = 0,
  \end{equation*}
uniformly in $n$.
\end{theo}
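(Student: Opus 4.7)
The strategy is to reduce the claim to an estimate on a single $k_m$-th nearest neighbor distance, and then conclude via a block-minimum construction and dominated convergence.

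\smallskip
\noindent\emph{Step~1 (reduction).} By \eqref{eq:ExpWqq-1}--\eqref{eq:ExpWqq-k} and the monotonicity of $l \mapsto |X - \NN^{(l)}_{\mathbf{X}'_m}(X)|$,
$$\Exp\!\left[W_q^q\bigl(\hat{\mu}_{\mathbf{X}_n}, \hat{\mu}_{\mathbf{X}'_m}^{(k_m)}\bigr)\right] \leq \frac{1}{k_m}\sum_{l=1}^{k_m}\Exp\!\left[|X - \NN^{(l)}_{\mathbf{X}'_m}(X)|^q\right] \leq \Exp\!\left[|X - \NN^{(k_m)}_{\mathbf{X}'_m}(X)|^q\right].$$
The right-hand side does not depend on $n$, so uniformity in $n$ is automatic; the task reduces to showing this expectation tends to $0$ as $m \to \infty$.

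\smallskip
\noindent\emph{Step~2 (block dominator).} Fix an integer $m_0' \geq m_0$ (to be specified in Step~3), partition $\mathbf{X}'_m$ into $R := \lfloor m/m_0'\rfloor$ disjoint blocks $B_1, \ldots, B_R$ of size $m_0'$, and set $Z_r := \min_{j \in B_r}|X - X'_j|$. Picking one representative per block shows $|X - \NN^{(k_m)}_{\mathbf{X}'_m}(X)| \leq Z_{(k_m)}$, the $k_m$-th order statistic of the family $(Z_1, \ldots, Z_R)$, which is i.i.d.\ conditionally on $X$. Applying Markov's inequality to the Binomial count $\sum_r \iind{Z_r > t}$ gives, once $k_m \leq R/2$, the key tail bound
$$\Pr(Z_{(k_m)} > t) \leq \frac{R}{R - k_m + 1}\Pr(Z_1 > t) \leq 2\Pr(Z_1 > t), \qquad t > 0.$$

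\smallskip
\noindent\emph{Step~3 (two ingredients).} First, pointwise convergence $\Pr(Z_{(k_m)} > t) \to 0$ for each $t > 0$: Assumption~\ref{ass:supp} ensures $F_X(t) := \mu_{X'}(B(X,t)) > 0$ $\mu_X$-a.s., so $\Pr(Z_1 \leq t \mid X) = 1 - (1 - F_X(t))^{m_0'} > 0$; since $k_m/R \to 0$, a conditional weak law of large numbers yields $\Pr(Z_{(k_m)} > t \mid X) \to 0$ a.s., and dominated convergence in $X$ gives the unconditional statement. Second, finiteness of $\Exp[Z_1^q]$: the bound $Z_1 \leq |X| + \min_{j \leq m_0'}|X'_j|$ together with $\Exp[|X|^q] < \infty$ reduces the question to $\Exp[(\min_{j \leq m_0'}|X'_j|)^q] < \infty$. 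Letting $G$ denote the CDF of $|X'_1|$, Assumption~\ref{ass:integ} states that $(1 - G)^{m_0}$ is integrable on $[0,\infty)$; as this function is non-increasing, it also satisfies $(1 - G(t))^{m_0} \leq C/t$, hence $(1 - G(t))^{km_0} \leq C^k/t^k$, so that $\int_0^\infty qt^{q-1}(1-G(t))^{km_0}\dd t < \infty$ for $k$ large enough. Take $m_0' := km_0$.

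\smallskip
\noindent\emph{Step~4 (conclude).} Combining Steps~2 and~3, dominated convergence applied to the representation $\Exp[Z_{(k_m)}^q] = \int_0^\infty qt^{q-1}\Pr(Z_{(k_m)} > t)\dd t$, with integrable dominator $2qt^{q-1}\Pr(Z_1 > t)$, yields $\Exp[Z_{(k_m)}^q] \to 0$ as $m \to \infty$, which by Step~1 is the claim. The main technical obstacle is the integrability bootstrap of Step~3: Assumption~\ref{ass:integ} is only a first-moment statement, and the point is that the monotonicity of $(1-G)^{m_0}$ lets one trade block size for arbitrary moments of the block minimum, enabling the argument to work for any $q \geq 1$ with $\Exp[|X|^q] < \infty$.
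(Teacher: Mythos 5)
Your argument is correct, but it follows a genuinely different route from the paper. The paper first treats $k=1$: it invokes the almost sure convergence of the $1$-NN distance (Lemma~2.2 of~\cite{biau2015lectures}) under Assumption~\ref{ass:supp}, and obtains a dominating function by bounding $\Exp[\min_{j\leq m}|X'_j|^q]$ through Lyapunov's inequality and a splitting of the sample into $\lceil q\rceil$ independent blocks of size $m_0$; it then handles general $k_m$ by the Biau--Devroye device of partitioning $\mathbf{X}'_m$ into $2l$ groups, which reduces the $l$-th NN distance among $m$ points to the $1$-NN distance among $\lfloor m/2l\rfloor$ points, giving the bound~\eqref{eq:oneboundknn}. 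You instead bound the average of the first $k_m$ NN distances crudely by the $k_m$-th one, compare it to the $k_m$-th order statistic of i.i.d.\ block minima of \emph{fixed} size $m_0'=km_0$, and conclude by dominated convergence in the tail representation: the Markov/binomial bound supplies an $m$-independent dominator $2\Pr(Z_1>t)$, the conditional weak law of large numbers replaces the citation of the a.s.\ convergence lemma, and your moment bootstrap (an integrable nonincreasing function is $\grandO(1/t)$, so taking $k>q$ copies of the block yields $q$-integrability of the block minimum) replaces the paper's Lyapunov/product trick. Your proof is self-contained and handles all $k_m$ in one stroke; the paper's decomposition is slightly sharper in that the intermediate bound~\eqref{eq:oneboundknn} in terms of the $1$-NN distance over $\lfloor m/2k_m\rfloor$ points is reused verbatim to derive the convergence rates of Corollary~\ref{corr:ratesk}, which your cruder reduction to the $k_m$-th NN distance would not deliver with the same constants. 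Both arguments are valid for the consistency statement itself.
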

Theorem~\ref{theo:consist} is proved in Subsection~\ref{ss:proofs}.   

\begin{rk}[On Assumption~\ref{ass:integ}]
  Assumption~\ref{ass:integ} is obviously satisfied if $X'$ has a finite first order moment, but also for some heavy-tailed distributions. It writes under the equivalent form
  \begin{equation*}
    \int_0^{+\infty} \Pr(|X'| > r)^{m_0}\dd r < +\infty,
  \end{equation*}
  which may be easier to check. An example of a random variable which does not satisfy this assumption, in dimension $d=1$, is $X'=\exp(1/U)$ where $U$ is a uniform random variable on $[0,1]$.
\end{rk}

\begin{rk}[Limit without support condition] If the support condition of Assumption~\ref{ass:supp} does not hold, then the proof of Theorem~\ref{theo:consist} may be adapted to show that
\begin{equation*}
  \lim_{m,n \to +\infty} \Exp\left[W_q^q\left(\hat{\mu}_{\mathbf{X}_n}, \hat{\mu}_{\mathbf{X}'_m}^{(1)}\right)\right] = \Exp\left[\dist(X,\supp(\mu_{X'}))^q\right].
\end{equation*}

\end{rk}
\subsection{Rates of convergence}
\label{sec:rateconv}
The next step of our study consists in complementing Theorem~\ref{theo:consist} with a rate of convergence. We first discuss the case $k=1$. Following~\eqref{eq:ExpWqq-1}, we start by writing
\begin{equation}\label{eq:nndisteq}
  \begin{split}
    \Exp\left[W_q^q\left(\hat{\mu}_{\mathbf{X}_n}, \hat{\mu}_{\mathbf{X}'_m}^{(1)}\right)\right] &=\Exp\left[\left|X - \NN_{\mathbf{X}'_m}(X)\right|^q\right]\\
    & = \Exp\left[\Exp\left[\left. \left|X - \NN_{\mathbf{X}'_m}(X)\right|^q \right|X\right]\right],
  \end{split}
\end{equation}
and observe that for any $x \in \supp(\mu_X)$, $|x - \NN_{\mathbf{X}'_m}(x)| = \min_{ j \in \llbracket 1 , m \rrbracket} |x-X'_j|$. If there is an open set $U$ of $\R^d$ containing $x$ and such that $\mu_{X'}(\cdot \cap U)$ has a density $p_{X'}$ with respect to the Lebesgue measure which is continuous at $x$, then an elementary computation shows that, for all $r \geq 0$,
\begin{equation*}
  \lim_{m \to +\infty} \Pr\left(m^{1/d} \min_{ j \in \llbracket 1, m \rrbracket} |x-X'_j| > r\right) = \exp\left(-r^d v_d p_{X'}(x)\right),
\end{equation*}
where $v_d$ denotes the volume of the unit sphere of $\R^d$ for the norm $|\cdot|$. If $p_{X'}(x)>0$ then this indicates that the correct order of convergence in Theorem~\ref{theo:consist} should be $m^{-q/d}$. If $p_{X'}(x)=0$, or if the measure $\mu_{X'}(\cdot \cap U)$ is not absolutely continuous with respect to the Lebesgue measure, it is easy to construct elementary examples yielding different rates of convergence; see also~\cite[Chapter~2]{biau2015lectures} for the singular case. We leave these peculiarities apart and work under the following strengthening of the support condition of Assumption~\ref{ass:supp}.

\begin{ass}[Strong support condition]\label{ass:ssupp}
  There exists an open set $U \subset \R^d$ which contains $\supp(\mu_X)$ and such that:
  \begin{enumerate}[label=(\roman*),ref=\roman*]
    \item the measure $\mu_{X'}(\cdot \cap U)$ has a density $p_{X'}$ with respect to the Lebesgue measure;
    \item the density $p_{X'}$ is continuous and positive on $U$;
    \item\label{ass:ssupp:sma} there exist $\kappa \in (0,1]$ and $r_\kappa > 0$ such that, for any $x \in U$, for any $r \in [0,r_\kappa]$,
    \begin{equation*}
      \Pr\left(X' \in B(x,r)\right) \geq \kappa p_{X'}(x) v_d r^d.
    \end{equation*}
  \end{enumerate}
\end{ass}

Obviously, Assumption~\ref{ass:ssupp} implies Assumption~\ref{ass:supp} because then $\supp(\mu_X) \subset U \subset \supp(\mu_{X'})$. Part~\eqref{ass:ssupp:sma} of Assumption~\ref{ass:ssupp} was introduced in~\cite{gadat2016classification} in the context of Nearest Neighbor classification, and called \emph{Strong minimal mass assumption} there. Similar assumptions are commonly used in set estimation, geometric inference and quantization, such as \emph{standardness}~\cite{CueFra97} or \emph{Ahlfors regularity}~\cite{Klo12}. 

Under Assumption~\ref{ass:ssupp}, for all $x \in \supp(\mu_X)$, a positive random variable $Z$ such that $\Pr(Z>r)=\exp(-r^d v_d p_{X'}(x))$ has moments
\begin{equation*}
  \Exp\left[Z^q\right] = \frac{\Gamma(1+q/d)}{(v_d p_{X'}(x))^{q/d}},
\end{equation*}
where $\Gamma$ denotes Euler's Gamma function. Therefore, as soon as the sequence
\begin{equation*}
 m^{q/d} \min_{j \in \llbracket 1, m \rrbracket}\left|X-X'_j \right|^{q}
\end{equation*}
 is uniformly integrable, the normalized quantity
\begin{equation*}
  m^{q/d}\Exp\left[W_q^q\left(\hat{\mu}_{\mathbf{X}_n}, \hat{\mu}_{\mathbf{X}'_m}^{(1)}\right)\right]
\end{equation*}
converges to
\begin{equation*}
  \frac{\Gamma(1+q/d)}{v_d^{q/d}}\Exp\left[\frac{1}{p_{X'}(X)^{q/d}}\right],
\end{equation*}
when  $m$ goes to infinity. This statement appears for example in the literature of stochastic optimal quantization~\cite[Theorem 9.1]{graf2007foundations}. Here, we provide an explicit moment condition ensuring uniform integrability.

\begin{ass}[Moments]\label{ass:mom}
  In addition to Assumption~\ref{ass:ssupp}, the condition
  \begin{equation*}
    \Exp\left[\frac{1+|X|^q}{p_{X'}(X)^{q/d}}\right] < +\infty
  \end{equation*}
  holds.
\end{ass}

Assumptions~\ref{ass:ssupp} and~\ref{ass:mom} are discussed in more detail below. We now state our second main result.

\begin{theo}[Convergence rates for $k=1$]\label{theo:rates}
  Let Assumptions~\ref{ass:integ} and \ref{ass:ssupp} hold, and let $q \in [1,+\infty)$ be such that Assumption~\ref{ass:mom} holds. Then we have
  \begin{equation*}
    \lim_{m \to +\infty} m^{q/d}\Exp\left[W_q^q\left(\hat{\mu}_{\mathbf{X}_n}, \hat{\mu}_{\mathbf{X}'_m}^{(1)}\right)\right] = \frac{\Gamma(1+q/d)}{v_d^{q/d}}\Exp\left[\frac{1}{p_{X'}(X)^{q/d}}\right].
  \end{equation*}
\end{theo}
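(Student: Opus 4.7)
The plan is to reduce the statement to a pointwise asymptotic for the $q$-th moment of the nearest-neighbor distance and then pass to the outer expectation by dominated convergence. By Proposition~\ref{prop:wopt} and the tower property,
\begin{equation*}
  \Exp\left[W_q^q\left(\hat{\mu}_{\mathbf{X}_n}, \hat{\mu}_{\mathbf{X}'_m}^{(1)}\right)\right] = \Exp\left[\varphi_m(X)\right], \qquad \varphi_m(x) := \Exp\left[R_m(x)^q\right],
\end{equation*}
where $R_m(x) := \min_{j \in \llbracket 1,m\rrbracket}|x - X'_j|$ and the inner expectation is taken with respect to the training sample. It thus suffices to prove that (a) for $\mu_X$-a.e.\ $x$, $m^{q/d}\varphi_m(x) \to \Gamma(1+q/d)/(v_d p_{X'}(x))^{q/d}$, and (b) $m^{q/d}\varphi_m(x)$ is dominated, uniformly in $m$, by a $\mu_X$-integrable function.

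For (a), the layer-cake formula together with the substitution $r = t m^{-1/d}$ gives
\begin{equation*}
  m^{q/d}\varphi_m(x) = \int_0^{+\infty} q t^{q-1}\bigl(1 - \Pr(X' \in B(x, tm^{-1/d}))\bigr)^m \dd t.
\end{equation*}
By continuity and positivity of $p_{X'}$ at $x \in \supp(\mu_X) \subset U$, the integrand converges pointwise to $qt^{q-1}\exp(-v_d t^d p_{X'}(x))$. Assumption~\ref{ass:ssupp}\eqref{ass:ssupp:sma} provides, on $[0, r_\kappa m^{1/d}]$, the integrable dominating bound $qt^{q-1}\exp(-\kappa v_d t^d p_{X'}(x))$, while the contribution of $t > r_\kappa m^{1/d}$ is controlled by the bound described below. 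Computing the limiting integral with the substitution $u = v_d t^d p_{X'}(x)$ produces exactly $\Gamma(1+q/d)/(v_d p_{X'}(x))^{q/d}$.

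The main obstacle is (b): securing a uniform-in-$m$ bound of the form $m^{q/d}\varphi_m(x) \leq C(1+|x|^q)/p_{X'}(x)^{q/d}$, which is $\mu_X$-integrable thanks to Assumption~\ref{ass:mom}. I would split $\varphi_m(x)$ at $r = r_\kappa$. The $[0,r_\kappa]$-contribution is controlled by the exponential bound issued from Assumption~\ref{ass:ssupp}\eqref{ass:ssupp:sma}, yielding a term dominated by a constant multiple of $1/p_{X'}(x)^{q/d}$. For the $(r_\kappa,+\infty)$-contribution, I would factor, for any $\tilde{m}_0 \leq m$,
\begin{equation*}
  \Pr(|X'-x|>r)^m \leq (1-\alpha(x))^{m-\tilde{m}_0}\Pr(|X'-x|>r)^{\tilde{m}_0}, \qquad r \geq r_\kappa,
\end{equation*}
with $\alpha(x) := \kappa p_{X'}(x) v_d r_\kappa^d$, and use the elementary inequality $\sup_{m \geq 1} m^{q/d}(1-\alpha)^m \leq C_q/\alpha^{q/d}$. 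This reduces the task to bounding $\Exp[R_{\tilde{m}_0}(x)^q]$, which the triangle inequality majorizes by $2^{q-1}(|x|^q + \Exp[\min_{j \leq \tilde{m}_0}|X'_j|^q])$.

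The final hurdle is that Assumption~\ref{ass:integ} only asserts finiteness of the \emph{first} moment of $\min_{j\leq m_0}|X'_j|$, whereas I need the $q$-th moment of $\min_{j \leq \tilde{m}_0}|X'_j|$. Here I would exploit monotonicity of $r \mapsto \Pr(|X'|>r)$ and Assumption~\ref{ass:integ} to write $r\,\Pr(|X'|>r)^{m_0} \leq \int_0^r \Pr(|X'|>s)^{m_0}\,\dd s \leq C$, whence $\Pr(|X'|>r)^{k m_0} \leq (C/r)^k$ for $r \geq 1$. Choosing any integer $k > q$ and setting $\tilde{m}_0 := k m_0$ then makes $\Exp[\min_{j \leq \tilde{m}_0}|X'_j|^q]$ finite, closes the uniform bound, and allows me to conclude by dominated convergence applied to the outer expectation over $X$.
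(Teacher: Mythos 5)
Your proposal is correct and follows essentially the same route as the paper's proof: the layer-cake representation of $m^{q/d}\Exp[\min_j|x-X'_j|^q]$, the split at $r_\kappa$ with the strong minimal mass bound of Assumption~\ref{ass:ssupp} giving the exponential domination near $0$, the factorization of $\Pr(|x-X'|>r)^m$ with a Markov-type tail bound derived from Assumption~\ref{ass:integ} for the far region, and dominated convergence against $C(1+|x|^q)/p_{X'}(x)^{q/d}$, which Assumption~\ref{ass:mom} makes $\mu_X$-integrable. The only differences (nesting the two dominated-convergence steps rather than treating the double integral at once, and taking the exponent $km_0$ with $k>q$ instead of $(q+1)m_0$) are cosmetic.
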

 
Theorem~\ref{theo:rates} is proved in Subsection~\ref{ss:proofs}. 

We now discuss the estimation of $\hat{\mu}_{\mathbf{X}_n}$ by the weighted empirical measure $\hat{\mu}_{\mathbf{X}'_m}^{(k)}$ for an arbitrary $k \in \llbracket 1, m\rrbracket$. By~\eqref{eq:opt1nn}, we first observe that we always have
\begin{equation*}
  W_q\left(\hat{\mu}_{\mathbf{X}_n}, \hat{\mu}_{\mathbf{X}'_m}^{(k)}\right) \geq W_q\left(\hat{\mu}_{\mathbf{X}_n}, \hat{\mu}_{\mathbf{X}'_m}^{(1)}\right),
\end{equation*}
so that the estimation of $\hat{\mu}_{\mathbf{X}_n}$ is deteriorated by increasing the number of neighbors. Still, in the asymptotic regime of Theorem~\ref{theo:consist}, a bound of the same order of magnitude as Theorem~\ref{theo:rates} may be obtained.
 
\begin{cor}[Convergence rates for $k$-NN]\label{corr:ratesk}Under the assumptions of Theorem~\ref{theo:rates}, for any nondecreasing sequence of positive integers $(k_m)_{m \geq 1}$ such that $k_m / m \to 0$ when $m \to +\infty$, we have
\begin{equation*}
  \underset{m \to +\infty}{\lim \sup} \left(\frac{m}{k_m}\right)^{q/d}\Exp\left[W_q^q\left(\hat{\mu}_{\mathbf{X}_n}, \hat{\mu}_{\mathbf{X}'_m}^{(k_m)}\right)\right] \leq c_{d,q}\frac{\Gamma(1+q/d)}{v_d^{q/d}}\Exp\left[\frac{1}{p_{X'}(X)^{q/d}}\right],
\end{equation*} with some constant $c_{d,q} > 1$.
\end{cor}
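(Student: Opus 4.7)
The plan is to start from the upper bound~\eqref{eq:upboundknn} taken in expectation, which in the form~\eqref{eq:ExpWqq-k} gives
\[
\Exp\left[W_q^q\left(\hat{\mu}_{\mathbf{X}_n},\hat{\mu}_{\mathbf{X}'_m}^{(k_m)}\right)\right] \leq \frac{1}{k_m}\sum_{l=1}^{k_m}\Exp\left[\left|X - \NN^{(l)}_{\mathbf{X}'_m}(X)\right|^q\right].
\]
Since the map $l \mapsto |x - \NN^{(l)}_{\mathbf{X}'_m}(x)|$ is nondecreasing, each term in the sum is dominated by the last one, which reduces the task to establishing
\[
\limsup_{m \to \infty}\left(\frac{m}{k_m}\right)^{q/d}\Exp\left[\left|X - \NN^{(k_m)}_{\mathbf{X}'_m}(X)\right|^q\right] \leq \frac{c_{d,q}\,\Gamma(1+q/d)}{v_d^{q/d}}\Exp\left[p_{X'}(X)^{-q/d}\right]
\]
for a suitable constant $c_{d,q} > 1$.

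To this end, I would condition on $X = x \in \supp(\mu_X) \subset U$ and exploit the Binomial representation
\[
\Pr\left(\left|x - \NN^{(k_m)}_{\mathbf{X}'_m}(x)\right| > r \,\big|\, X = x\right) = \Pr\left(\mathrm{Bin}(m,F_x(r)) < k_m\right), \quad F_x(r) := \mu_{X'}(B(x,r)).
\]
Assumption~\ref{ass:ssupp} supplies both the local equivalent $F_x(r) \sim v_d p_{X'}(x)\, r^d$ as $r \to 0$ and the uniform lower bound from part~\eqref{ass:ssupp:sma}. Standard Binomial concentration in the regime $k_m \to \infty$, $k_m/m \to 0$, then gives
\[
\left(\frac{m v_d p_{X'}(x)}{k_m}\right)^{1/d}\left|x - \NN^{(k_m)}_{\mathbf{X}'_m}(x)\right| \longrightarrow 1 \quad \text{in probability as } m \to \infty,
\]
and hence the candidate conditional pointwise limit
\[
\left(\frac{m}{k_m}\right)^{q/d}\Exp\left[\left|x - \NN^{(k_m)}_{\mathbf{X}'_m}(x)\right|^q \,\big|\, X = x\right] \longrightarrow \frac{1}{(v_d p_{X'}(x))^{q/d}}.
\]

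The hard part will be lifting this pointwise convergence to convergence of the unconditional expectation over $X \sim \mu_X$, which boils down to establishing uniform integrability of the rescaled sequence $(m/k_m)^{q/d}|X - \NN^{(k_m)}_{\mathbf{X}'_m}(X)|^q$. I would mimic the strategy of the proof of Theorem~\ref{theo:rates}: split the tail integral $\int_0^\infty r^{q-1}\Pr(|X-\NN^{(k_m)}_{\mathbf{X}'_m}(X)| > r)\dd r$ into a small-$r$ regime, where part~\eqref{ass:ssupp:sma} of Assumption~\ref{ass:ssupp} furnishes explicit exponential Binomial tail bounds uniform in $m$, and a large-$r$ regime, where Assumption~\ref{ass:integ} controls the moments of $\min_{j \in \llbracket 1, m_0 \rrbracket}|X - X'_j|$ and hence, up to a multiplicative factor absorbing the shift from the $1$-NN to the $k_m$-th NN for $m$ large, those of the quantity of interest. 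Assumption~\ref{ass:mom} then supplies an integrable dominating function in $X$, so dominated convergence yields
\[
\limsup_{m \to \infty}\left(\frac{m}{k_m}\right)^{q/d}\Exp\left[\left|X - \NN^{(k_m)}_{\mathbf{X}'_m}(X)\right|^q\right] \leq \frac{1}{v_d^{q/d}}\Exp\left[p_{X'}(X)^{-q/d}\right].
\]
Comparing with the right-hand side of Theorem~\ref{theo:rates}, this identifies $c_{d,q} = 1/\Gamma(1+q/d)$; replacing it by any strictly larger constant absorbs the subleading errors coming from the concentration step and gives the announced $c_{d,q} > 1$.
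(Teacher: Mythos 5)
Your route is genuinely different from the paper's. The paper never analyzes the $k$-NN distance directly: it starts from the second line of~\eqref{eq:oneboundknn}, i.e.\ the partition comparison $\Exp[|X-\NN^{(l)}_{\mathbf{X}'_m}(X)|^q]\le 2\,\Exp[|X-\NN_{\mathbf{X}'_{\lfloor m/2l\rfloor}}(X)|^q]$ established in the proof of Theorem~\ref{theo:consist}, applies Theorem~\ref{theo:rates} to $F(u)=u^{q/d}\Exp[|X-\NN_{\mathbf{X}'_{\lfloor u\rfloor}}(X)|^q]$ along the subsample sizes $m/2l$, and identifies the sum over $l$ as a Riemann sum, producing the explicit constant $c_{d,q}=2^{q/d+1}/(q/d+1)$ (or its finite-sum analogue when $k_m$ is bounded). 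Your direct analysis of the $k_m$-NN distance would, if completed, even give a sharper constant in the regime $k_m\to+\infty$, but as written it has two genuine gaps.

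First, the corollary only assumes $(k_m)$ nondecreasing with $k_m/m\to 0$, so bounded (even constant) sequences are allowed; your concentration step, and hence the claimed limit $(mv_dp_{X'}(x)/k_m)^{1/d}|x-\NN^{(k_m)}_{\mathbf{X}'_m}(x)|\to 1$ in probability, requires $k_m\to+\infty$ and is false for fixed $k_m=k$, where the rescaled distance has a nondegenerate limit law and the conditional moment limit is $\Gamma(k+q/d)/(\Gamma(k)k^{q/d})\,(v_dp_{X'}(x))^{-q/d}$ rather than $(v_dp_{X'}(x))^{-q/d}$; covering this case would require a $k$-NN analogue of Theorem~\ref{theo:rates} that neither the paper nor your sketch provides. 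Second, and more seriously, the uniform-integrability step is precisely where the mathematical work lies, and your treatment of the large-$r$ regime is unjustified: for the $k_m$-th NN the tail is $\Pr(\mathrm{Bin}(m,F_x(r))<k_m)$, whose natural bound carries a combinatorial factor of order $\binom{m}{k_m-1}$ growing with $m$, so the device of the proof of Theorem~\ref{theo:rates} (peeling off $(q+1)m_0$ independent factors and using Markov with Assumption~\ref{ass:integ}) does not transfer verbatim, and the claim that the $1$-NN control applies ``up to a multiplicative factor absorbing the shift from the 1-NN to the $k_m$-th NN'' has no justification, since the $k_m$-NN distance is not within a deterministic constant factor of the $1$-NN distance. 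The clean way to obtain such a comparison is exactly the partition/subsampling inequality quoted above from the proof of Theorem~\ref{theo:consist}; but once you invoke it, you are led back to the paper's argument, which requires no new tail analysis at all.
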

Corollary~\ref{corr:ratesk} is proved in Subsection~\ref{ss:proofs}, where the expression of the constant $c_{d,q}$ is also given.
\begin{rk}[Optimal choice of $\mu_{X'}$]
  When $X$ has a density $p_{X}$ with respect to the Lebesgue measure, an interesting fact is that the minimum of the quantity $\Exp\left[1/p_{X'}(X)^{q/d}\right]$ over the probability measure $p_{X'}$ is not reached when $p_{X'} = p_{X}$. Instead, according to  \cite{zador1963development}, the minimum is attained when $p_{X'}(x) \propto p_{X}(x)^{d/(q+d)}$.
\end{rk}

\begin{rk}[NN distance without covariate shift]
  In the case where $\mu_X=\mu_{X'}$, the quantity
  \begin{equation*}
    \Exp\left[W_q^q\left(\hat{\mu}_{\mathbf{X}_n}, \hat{\mu}_{\mathbf{X}'_m}^{(1)}\right)\right] = \Exp\left[\left|X - \NN_{\mathbf{X}'_m}(X)\right|^q\right]
  \end{equation*}
  is called \emph{Nearest Neighbor distance}. It naturally arises in the theoretical study of NN regression and classification~\cite[Chapter 2]{biau2015lectures}. Previous works on the topic focus mainly on the convergence when $q=2$ and assume that $X$ has a bounded support~\cite{biau2015lectures,evans2002asymptotic,liitiainen2008bounds,penrose2011laws}. Some works~\cite{chen2018explaining,kohler2006rates} consider some random variables $X$ with unbounded support in the context of $k$-NN regression, but make the assumption of a bounded regression function $\psi$. 
  
  In this perspective, a direct corollary from Theorem~\ref{theo:rates} is the following statement: let $X$ have a density $p_X$ for which the strong minimal mass assumption~\ref{ass:ssupp}~\eqref{ass:ssupp:sma} holds with $U = \R^d$ and 
  \begin{equation*}
    \int_{\R^{d}} (1+|x|^{q})p_{X}(x)^{1-q/d} \dd x<+\infty.
  \end{equation*}
  Let $\mathbf{X}_n = (X_1, \ldots, X_n)$ be an i.i.d sample from $p_X$, independent from $X$, and let $\NN_{\mathbf{X}_n}$ denote the NN among $\mathbf{X}_n$. We have
  \begin{equation*}
    \Exp\left[|X - \NN_{\mathbf{X}_{n}}(X)|^{q}\right] \underset{n \rightarrow + \infty}{\sim} \frac{\Gamma(1+q/d)}{v_d^{q/d} n^{q/d}}\int_{\R^{d}} p_X(x)^{1-q/d}\dd x.
  \end{equation*}
  This extends the results of the literature by ensuring the asymptotic equivalence for random variables with unbounded support.
\end{rk}

Let us conclude this subsection with some comments on Assumptions~\ref{ass:ssupp} and~\ref{ass:mom}. When $X$ has a compact support, Assumptions \ref{ass:ssupp} and \ref{ass:mom} are verified as soon as $\mu_{X'}$ has a continuous density $p_{X'}$ which is bounded from below and above on an open set $U'$ containing the support of $\mu_{X}$. Indeed, in that case there exist $\epsilon>0$ and an open subset $U$ of $U'$ such that $U$ contains $\supp(\mu_X)$ and $U'$ contains the $\epsilon$-neighborhood of $U$. Then, Assumption~\ref{ass:ssupp}~\eqref{ass:ssupp:sma} is verified with $r_\kappa < \epsilon$ and $\kappa = \inf_{x \in U'} p_{X'}(x)/\sup_{x \in U'} p_{X'}(x)$.

Assumptions~\ref{ass:ssupp} and~\ref{ass:mom} also hold in some nontrivial noncompact cases.  An example of a sufficient condition for Assumption~\ref{ass:ssupp}, which does not depend on $\mu_{X}$, is given in the next statement and is proved in Subsection~\ref{ss:proofs}.
\begin{lem}[Radial density - Sufficient condition for Assumption~\ref{ass:ssupp}]\label{lem:radass} Let $\| \cdot \|$ be a norm on $\R^{d}$, induced by an inner product and not necessarily identical to $|\cdot|$.
  If $\mu_{X'}$ has a density $p_{X'}$ with respect to the Lebesgue measure on $\R^d$, which writes $p_{X'}(x) = h(\|x-x_{0}\|)$ for some $x_0 \in \R^d$ and $h : [0,+\infty) \to \R$ continuous, positive and nonincreasing, then Assumption~\ref{ass:ssupp} holds with $U = \R^{d}$. 
\end{lem}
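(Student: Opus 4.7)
Parts (i) and (ii) of Assumption~\ref{ass:ssupp} follow immediately with $U = \R^d$: the density $p_{X'}(x) = h(\|x-x_0\|)$ is continuous and positive on all of $\R^d$ as soon as $h$ and $\|\cdot\|$ are. The real work is part (iii). Writing $s := \|x - x_0\|$, $v := x_0 - x$, and parametrising $y = x + z$ with $z \in \R^d$, the inner-product identity
\begin{equation*}
  \|y - x_0\|^2 = \|v\|^2 - 2 \langle z, v\rangle_{\|\cdot\|} + \|z\|^2
\end{equation*}
shows that $\|y - x_0\| \leq s$ is equivalent to $\langle z, v\rangle_{\|\cdot\|} \geq \|z\|^2/2$. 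Since $h$ is nonincreasing, my strategy is to exhibit a subset of $\{z : |z| \leq r\}$ of Lebesgue measure at least $\kappa v_d r^d$ on which this scalar inequality holds: then $h(\|y - x_0\|) \geq h(s) = p_{X'}(x)$ on the translate of that set, and (iii) follows by integration. To this end, fix $C' > 0$ with $\|z\| \leq C'|z|$ for all $z$, pick $r_\kappa > 0$ arbitrarily, and split according to whether $s > C'r_\kappa$ or $s \leq C'r_\kappa$.

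In the first regime ($s > C'r_\kappa$) the argument is geometric. With $u := v/\|v\|$, consider the closed cone $K(u) := \{z \in \R^d : \langle z, u\rangle_{\|\cdot\|} \geq \|z\|/2\}$; it has nonempty interior since every point $\lambda u$ with $\lambda > 0$ satisfies the strict inequality. For $z \in K(u)$ with $|z| \leq r \leq r_\kappa$, one has $\|z\| \leq C'r_\kappa < s$, hence $\langle z, v\rangle_{\|\cdot\|} = s\langle z, u\rangle_{\|\cdot\|} \geq s\|z\|/2 \geq \|z\|^2/2$, which is exactly the inequality needed. By homogeneity, $\mathrm{Leb}(K(u) \cap \{|z|\leq r\}) = V(u) r^d$ with $V(u) := \mathrm{Leb}(K(u) \cap \{|z|\leq 1\}) > 0$; the map $u \mapsto V(u)$ is continuous on the $\|\cdot\|$-unit sphere (by dominated convergence, using that the boundary of $K(u)$ has Lebesgue measure zero), which is compact, so $V_0 := \inf_{\|u\| = 1} V(u) > 0$ delivers the sought bound uniformly in $x$.

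In the second regime ($s \leq C'r_\kappa$) the point $x$ lies near $x_0$ and I argue instead by boundedness of $h$ on a bounded interval: for $y \in B(x, r)$ with $r \leq r_\kappa$, the triangle inequality gives $\|y - x_0\| \leq s + C'r \leq 2C'r_\kappa$, so $h(\|y-x_0\|) \geq h(2C'r_\kappa)$, while trivially $h(s) \leq h(0)$. This yields
\begin{equation*}
  \Pr\bigl(X' \in B(x,r)\bigr) \geq h(2C'r_\kappa)\, v_d r^d \geq \frac{h(2C'r_\kappa)}{h(0)}\, p_{X'}(x)\, v_d r^d.
\end{equation*}
Taking $\kappa := \min\bigl(V_0/v_d,\ h(2C'r_\kappa)/h(0)\bigr) > 0$ concludes the verification. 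The main technical obstacle is the uniform bound $V_0 > 0$: this is where the inner-product hypothesis on $\|\cdot\|$ really enters (through the identity for $\|y-x_0\|^2$ that drives the whole argument), together with a standard continuity-plus-compactness argument on the unit sphere.
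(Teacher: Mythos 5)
Your proof is correct and takes essentially the same route as the paper's: the same split into regimes near and far from $x_0$, the same half-cone $\{z:\langle z,u\rangle\geq\|z\|/2\}$ pointing towards $x_0$ on which the monotonicity of $h$ gives a density at least $p_{X'}(x)$, and the same ratio of values of $h$ (your $h(2C'r_\kappa)/h(0)$, the paper's $h(r_0)/h(0)$) in the near regime. The only minor divergence is how the cone--ball volume is bounded below uniformly in $x$: the paper inscribes a $\|\cdot\|$-ball in $B(x,r)$ via norm equivalence, so that the intersected volume is the same for every $x$ by symmetry of the inner-product ball, whereas you intersect the cone with the $|\cdot|$-ball directly and recover uniformity through continuity of $u\mapsto V(u)$ and compactness of the unit sphere --- both arguments are valid.
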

We also refer to~\cite[Section~2.4]{gadat2016classification} for a discussion of this assumption.

Assumption \ref{ass:mom} gives a relationship between $\mu_{X}$ and $p_{X'}$ to ensure the convergence. In essence, it asserts that the tail of $\mu_{X}$ must be quite lightweight compared to the tail of $p_{X'}$. For instance, if $X$ and $X'$ are centered Gaussian vectors with respective covariance $\sigma^2 I_d$ and $\sigma'^2 I_d$, then by Lemma~\ref{lem:radass}, Assumption~\ref{ass:ssupp} is satisfied with $U=\R^d$, and it is easy to check that for $q \in [1,+\infty)$, Assumption \ref{ass:mom} holds if and only if $\sigma'^2 > \sigma^2 q/d$.

\subsection{Proofs}\label{ss:proofs} In this subsection, we present the proofs of Theorems~\ref{theo:consist} and~\ref{theo:rates}, Corollary~\ref{corr:ratesk} and Lemma~\ref{lem:radass}.

\begin{proof}[Proof of Theorem~\ref{theo:consist}]
We begin our proof with the constant case $k_m = 1$ for all $m$ and then extend it to the general case.
We recall that by~\eqref{eq:ExpWqq-1},
\begin{equation*}
  \Exp\left[W_q^q\left(\hat{\mu}_{\mathbf{X}_n}, \hat{\mu}_{\mathbf{X}'_m}^{(1)}\right)\right] = \Exp\left[\left|X - \NN_{\mathbf{X}'_m}(X)\right|^q\right] = \Exp \left[ \min_{j\in\llbracket 1,m \rrbracket }|X-X'_{j}|^{q} \right].
\end{equation*}

By Assumption \ref{ass:supp}, $X \in\supp(\mu_{X'}) $ almost surely, so that we deduce from Lemma 2.2 in \cite[Chapter 2]{biau2015lectures} that
\begin{equation*}
 \min_{j\in \llbracket 1,m \rrbracket} |X-X'_{j}|^{q} \underset{m \rightarrow +\infty}{\overset{a.s}{\longrightarrow}} 0.
\end{equation*}
Let $m_0$ be the integer given by Assumption~\ref{ass:integ}, we have
\begin{equation*}
\min_{j\in \llbracket 1,m\rrbracket }|X-X'_{j}|^{q}\leq 2^{q-1}\left(|X|^{q} + \min_{j\in\llbracket 1,m\rrbracket}|X'_{j}|^{q}\right).
\end{equation*}
The random variable $|X|^{q} $ is integrable by assumption and for $m \geq \lceil q \rceil m_{0}$, the inequality
\begin{equation*}
\begin{split}
    \Exp\left[ \min_{j \in \llbracket1,m \rrbracket} \left|X^{\prime}_j \right|^{q} \right] &\leq \Exp\left[ \min_{j \in \llbracket1,m \rrbracket} \left|X^{\prime}_j \right|^{\lceil q \rceil} \right]^{q/\lceil q \rceil}\\
    &\leq \Exp\left[ \min_{j \in \llbracket1,m_{0 }\rrbracket} \left|X^{\prime}_j \right|\min_{j \in \llbracket m_{0}+1,2m_{0} \rrbracket} \left|X^{\prime}_j \right|  \cdots \min_{j \in \llbracket (\lceil q \rceil-1)m_{0}+1,\lceil q \rceil m_{0} \rrbracket} \left|X^{\prime}_j \right|\right]^{q/\lceil q \rceil}  \\
    &\leq \Exp \left[\min_{j \in \llbracket1,m_{0 }\rrbracket} \left|X^{\prime}_j \right| \right]^{q} <+\infty
 \end{split}
\end{equation*}
holds. Then by the dominated convergence theorem,
\begin{equation*}
 \Exp \left[ \min_{j \in\llbracket1,m\rrbracket}|X-X'_{j}|^{q} \right] \underset{m \rightarrow + \infty}{\longrightarrow} 0.
\end{equation*} 

For the general case $k_m/m \rightarrow 0$, we adapt directly the proof of \cite[Theorem 2.4]{biau2015lectures} to the context $\mu_X \neq \mu_{X'}$. Let us fix $l \in \llbracket 1,m/2\rrbracket$ and  partition the set $\{X'_1,\ldots, X'_m \}$ into $2l$ sets of size $m_1,\ldots,m_{2l}$ with, for all $j \in \llbracket 1,2l\rrbracket$,
\begin{equation*}
\lfloor m/2l \rfloor \leq m_j  \leq \lfloor  m/2l \rfloor +1.
\end{equation*}
We denote by $\NN^{(1,j)}_{\mathbf{X}'_m}$ the $1$-NN among the subset $j$. By the definition of $\NN^{(l)}_{\mathbf{X}'_{m}}$, there are at least $l$ subsets $j$ for which 
\begin{equation*}
  |X - \NN^{(l)}_{\mathbf{X}'_{m}}(X)| \leq |X-\NN^{(1,j)}_{\mathbf{X}'_m}(X) |,
\end{equation*}
therefore
\begin{equation*}
  |X - \NN^{(l)}_{\mathbf{X}'_{m}}(X) |^{q} \leq \frac{1}{l} \sum_{j=1}^{2l}  |X-\NN^{(1,j)}_{\mathbf{X}'_m}(X) |^{q},
\end{equation*}
and consequently
\begin{equation*}
\Exp\left[\left|X - \NN^{(l)}_{\mathbf{X}'_{m}}(X) \right|^{q} \right] \leq 2\Exp\left[\left| X-\NN_{\mathbf{X}'_{\lfloor m/2l \rfloor}}(X) \right|^{q}\right].
\end{equation*}
Finally, we deduce from~\eqref{eq:ExpWqq-k} that, as soon as $k_m \leq m/2$,
\begin{equation}
\begin{split}
\label{eq:oneboundknn}
   \Exp\left[W_q^q\left(\hat{\mu}_{\mathbf{X}_n}, \hat{\mu}_{\mathbf{X}'_m}^{(k_m)}\right)\right] &\leq \frac{1}{k_{m}}\sum_{l=1}^{k_{m}}\Exp\left[\left|X - \NN^{(l)}_{\mathbf{X}'_{m}}(X) \right|^{q} \right]  \\
   &\leq \frac{2}{k_{m}} \sum_{l=1}^{k_{m}}\Exp\left[\left|X - \NN_{\mathbf{X}'_{\lfloor m/2l\rfloor}}(X) \right|^{q} \right] \\
   & \leq 2\Exp\left[\left|X - \NN_{\mathbf{X}'_{\lfloor m/2k_{m}\rfloor}}(X) \right|^{q} \right], 
\end{split}
\end{equation}
which goes to $0$ as a consequence of the first part of the proof when $m/2k_{m}$ goes to infinity.
\end{proof}

\begin{proof}[Proof of Theorem~\ref{theo:rates}]
By~\eqref{eq:ExpWqq-1}, we have 
\begin{equation}\label{eq:pfrates}
    \begin{split}
   \Exp \left[m^{q/d}W_{q}^{q}(\hat{\mu}_{\mathbf{X}_n},\hat{\mu}_{\mathbf{X}'_m}^{(1)})\right]& =\Exp\left[ \Exp \left[  \left. m^{q/d}\min_{j\in \llbracket 1,m \rrbracket}\left|X-X'_{j}\right|^{q} \right| X  \right] \right] \\
    & = \int_{\R^{d}} \int_{0}^{+\infty}\Pr\left(m^{q/d}\min_{j\in\llbracket1,m\rrbracket}|x-X'_{j}|^{q} >t\right) \dd t\dd\mu_{X}( x) \\
    & =  \int_{\R^{d}} \int_{0}^{+\infty}\Pr(m^{q/d}|x-X'|^{q} >t)^{m}\dd t\dd \mu_{X}( x),
   \end{split}
\end{equation}
by independence of the $X'_{j}$. The proof consists in computing the pointwise limit of $\Pr(m^{p/d}|x-Y|^{p} >t)^{m}$ for $(x,t) \in \supp(\mu_{X})\times \R^{+}$  and then establishing the convergence of the integral via the dominated convergence theorem.

\emph{Pointwise convergence.}
We have
\begin{equation*}
    \begin{split}
        \Pr(m^{q/d}|x-X'|^{q} >t)^{m}& = \left(1 -  \Pr(|x-X'| \leq t^{1/q}/m^{1/d})\right)^{m}\\
        & = \exp\left(m\log\left(1- \Pr(|x-X'| \leq t^{1/q}/m^{1/d})\right)\right).
    \end{split}
\end{equation*}

By Assumption~\ref{ass:ssupp}, we have
\begin{equation*}
 \Pr(|x-X'| \leq t^{1/q}/m^{1/d}) = p_{X'}(x)v_{d}t^{d/q}/m + \petito(1/m),
\end{equation*}
with $v_{d}$ the volume of the unit sphere. Thus 
\begin{equation*}
m\log\left(1- \Pr(|x-X'| \leq t^{1/q}/m^{1/d})\right) =-p_{X'}(x)v_{d}t^{d/q}+ \petito(1),
\end{equation*}
and we conclude that
\begin{equation*}
 \Pr(m^{q/d}|x-X'|^{q} >t)^{m}  \underset{m \rightarrow + \infty}{\longrightarrow} \exp\left(-p_{X'}(x)v_{d}t^{d/q}\right).
\end{equation*}

\emph{Dominated convergence.}
Let $r_{\kappa}>0$ be given by Assumption~\ref{ass:ssupp}. We split the integral in the right-hand side of~\eqref{eq:pfrates} and study each term separately
\begin{equation*} 
\int_{\R^{d}} \int_{0}^{+\infty} \Pr(m^{q/d}|x-X'|^{q} >t)^{m}\dd t\dd \mu_{X}(x) = \mathrm{I} + \mathrm{II}
\end{equation*}
with
\begin{equation*}
\begin{split}
  \mathrm{I}& := \int_{\R^{d}} \int_{0}^{r_{\kappa}^{q}m^{q/d}} \Pr(|x-X'| >t^{1/q}/m^{1/d})^{m}\dd t\dd \mu_{X}(x),  \\
  \mathrm{II}&: =\int_{\R^{d}} \int_{r_{\kappa}^{q}m^{q/d}}^{+\infty} \Pr(|x-X'| >t^{1/q}/m^{1/d})^{m} \dd t\dd \mu_{X}(x).
 \end{split} 
\end{equation*} 

\emph{Convergence of $\mathrm{I}$.} For $t \in [0,r_{\kappa}^{q}m^{q/d}]$, we have $t^{1/q}/m^{1/d} \leq r_{\kappa}$ and thus
\begin{align*}
  \Pr(|x-X'| >t^{1/q}/m^{1/d})^{m}  &= \left(1- \Pr(|x-X'| \leq t^{1/q}/m^{1/d}) \right)^{m}\\
  & \leq \left(1- \frac{p_{X'}(x)v_d\kappa t^{d/q}}{m} \right)^{m} 
\end{align*}
 by Assumption \ref{ass:ssupp}.

Using the elementary inequality $(1-a/n)^n \leq \exp(-a)$ for $a \leq n$, we can write
\begin{equation*}
\begin{split}
 \Pr(|x-X'| >t^{1/q}/m^{1/d})^{m}  & \leq \exp(-\kappa v_d p_{X'}(x) t^{d/q}).
 \end{split}
\end{equation*}
This bound does not depend on $m$ and
the integral
\begin{align*}
  \int_{\R^d}\int_{0}^{+\infty} \exp\left(-\kappa v_d p_{X'}(x) t^{d/q}\right)\dd t \dd\mu_X(x) &= \int_{\R^d}\frac{\Gamma(1+q/d)}{(\kappa v_d p_{X'}(x))^{q/d}} \dd\mu_X(x)\\
  &= \frac{\Gamma(1+q/d)}{(\kappa v_d)^{q/d}} \Exp\left[\frac{1}{p_{X'}(X)^{q/d}}\right]
\end{align*}
 is finite by Assumption~\ref{ass:mom}. We therefore deduce from the dominated convergence theorem that
\begin{align*}
 \lim_{m \to +\infty} \mathrm{I} &= \int_{\R^{d}} \int_{0}^{+\infty}\exp\left(-p_{X'}(x)v_{d}t^{d/q}\right)\dd t \dd\mu_{X}(x)\\
 & = \frac{\Gamma(1+q/d)}{v_{d}^{q/d} }\Exp\left[\frac{1}{p_{X'}(X)^{q/d}}\right] .
\end{align*}

\emph{Convergence of $\mathrm{II}$.} Let $m \geq 2(q+1)m_{0}$. Using the change of variable $r^{q}= t/m^{q/d}$, we have 
\begin{align*}
  \mathrm{II} &= q\int_{\R^{d}} \int_{r_{\kappa}}^{+\infty} m^{q/d}r^{q-1}\Pr(|x-X'| >r)^{m}\dd r\dd \mu_{X} (x)\\
  & \leq q\int_{\R^{d}}  \int_{r_{\kappa}}^{+\infty} V_{m}(x,r)\dd r\dd \mu_{X} (x),
\end{align*}
with
\begin{equation*}
  V_{m}(x,r) := m^{q/d}\Pr(|x-X'| >r_{\kappa})^{m-(q+1)m_{0}}r^{q-1}\Pr(|x-X'| >r)^{(q+1)m_{0}}.
\end{equation*}
As $\Pr(|x-X'| >r_{\kappa})<1$ for all $x$ in $U$, by Assumption~\ref{ass:ssupp}, $V_{m}(x,r)$ is pointwise convergent to $0$ on the support of $\mu_X$. 
We check that $V_m(x,r)$ is bounded from above by an integrable function which does not depend on $m$. Let us denote $m' = m - (q+1)m_0 \geq m/2$ and rewrite
\begin{align*}
  m^{q/d}\Pr(|x-X'| >r_{\kappa})^{m-(q+1)m_{0}} &= \left(\frac{m}{m'}\right)^{q/d} m'^{q/d}\Pr(|x-X'| >r_{\kappa})^{m'}\\
  &\leq 2^{q/d}m'^{q/d}\left(1-\Pr(|x-X'| \leq r_{\kappa})\right)^{m'}\\
  &\leq 2^{q/d}m'^{q/d}\exp\left(-m'\kappa p_{X'}(x)v_dr_{\kappa}^d\right),
\end{align*}
where we have used Assumption~\ref{ass:ssupp} and the elementary above inequality  at the third line. We deduce that
\begin{equation*}
  m^{q/d}\Pr(|x-X'| >r_{\kappa})^{m-(q+1)m_{0}} \leq \frac{C_1}{p_{X'}(x)^{q/d}}, \quad C_1 := \frac{2^{q/d}}{(\kappa v_d r_{\kappa}^d)^{q/d}}\sup_{u \geq 0} (u^{q/d}e^{-u}),
\end{equation*}
so that
\begin{equation}\label{eq:pfrates:2}
  V_m(x,r) \leq \tilde{V}(x,r) := \frac{C_1}{p_{X'}(x)^{q/d}}r^{q-1}\Pr(|x-X'| >r)^{(q+1)m_{0}}.
\end{equation}

To complete the proof, we verify that $\tilde{V}(x,r)$ is integrable on $U\times [r_{\kappa},+\infty)$. We first fix $x \in \R^d$ and estimate the integral of $\tilde{V}(x,r)$ in $r$. Using the fact that if $|x-X'|>r$ then $|X'|>r-|x|$, we first write
\begin{align*}
  \int_{r_{\kappa}}^{+\infty} r^{q-1} \Pr(|x-X'|>r)^{(q+1)m_0}\dd r &\leq \int_{0}^{+\infty} r^{q-1} \Pr(|X'|>r-|x|)^{(q+1)m_0}\dd r\\
  &=\int_{-|x|}^{+\infty} (r+|x|)^{q-1} \Pr(|X'|>r)^{(q+1)m_0}\dd r.
\end{align*}
On the interval $[-|x|,0]$, we have
\begin{equation*}
  \int_{-|x|}^{0} (r+|x|)^{q-1} \Pr(|X'|>r)^{(q+1)m_0}\dd r = \int_{-|x|}^{0} (r+|x|)^{q-1} \dd r = \frac{|x|^q}{q}.
\end{equation*}
On the interval $[0,+\infty)$, we first rewrite
\begin{align*}
  &\int_0^{+\infty} (r+|x|)^{q-1} \Pr(|X'|>r)^{(q+1)m_0}\dd r = \int_{0}^{+\infty} (r+|x|)^{q-1} \Pr\left(\min_{ j \in \llbracket 1 , m_0 \rrbracket} |X'_j|>r\right)^{q+1}\dd r,
\end{align*}
and recall from Assumption~\ref{ass:integ} that $C_2 := \Exp[\min_{j \in \llbracket 1,m_0\rrbracket} |X'_j|]<+\infty$. As a consequence, we deduce from Markov's inequality that the right-hand side in the previous equality is bounded from above by
\begin{equation*}
  \int_{0}^{|x| \vee 1} (r+|x|)^{q-1}\dd r + C_2^{q+1} \int_{|x| \vee 1}^{+\infty} \frac{(r+|x|)^{q-1}}{r^{q+1}}\dd r.
\end{equation*}
If $|x| \leq 1$ then this expression is bounded from above. If $|x|>1$, then we have
\begin{equation*}
  \int_{0}^{|x|} (r+|x|)^{q-1}\dd r \leq 2^{q-1}|x|^q
\end{equation*}
on the one hand, and
\begin{equation*}
  \int_{|x|}^{+\infty} \frac{(r+|x|)^{q-1}}{r^{q+1}}\dd r = \frac{1}{|x|}\int_{1}^{+\infty} \frac{(u+1)^{q-1}}{u^{q+1}}\dd u, 
\end{equation*}
which is bounded from above, on the other hand. Overall, we conclude that there exists a constant $C_3$ such that
\begin{equation}\label{eq:pfrates:3}
  \int_{r_{\kappa}}^{+\infty} r^{q-1} \Pr(|x-X'|>r)^{(q+1)m_0}\dd r \leq C_3(1+|x|^q).
\end{equation}
As a consequence, the combination of~\eqref{eq:pfrates:2} and~\eqref{eq:pfrates:3} yields
\begin{equation*}
  \int_{\R^d} \int_{r_{\kappa}}^{+\infty} \tilde{V}(x,r)\dd r \dd\mu_X(x) \leq C_1C_3 \Exp\left[\frac{1+|X|^q}{p_{X'}(X)^{q/d}}\right],
\end{equation*}
which by Assumption~\ref{ass:mom} allows to apply the dominated convergence theorem to show that $\mathrm{II}$ goes to $0$, and thereby completes the proof.
\end{proof}

\begin{proof}[Proof of Corollary \ref{corr:ratesk}]
We start from the second line of Equation~\eqref{eq:oneboundknn} and estimate its right-hand side
\begin{equation*}
\begin{split}
   \Exp \left[W_{q}^{q}(\hat{\mu}_{\mathbf{X}_n},\hat{\mu}_{\mathbf{X}'_m}^{(k_m)}) \right] &\leq \frac{2}{k_m}\sum_{l=1}^{k_m}\Exp\left[\left|X - \NN_{\mathbf{X}'_{\lfloor m/2l\rfloor}}(X) \right|^{q} \right] \\
    &=\frac{2}{k_m}\sum_{l=1}^{k_m} \left(\frac{2k_m}{m} \frac{l}{k_m} \frac{m}{2l} \right)^{q/d}\Exp\left[\left|X - \NN_{\mathbf{X}'_{\lfloor m/2l\rfloor}}(X) \right|^{q} \right] \\
    &= \left(\frac{k_m}{m} \right)^{q/d}\frac{2^{q/d+1}}{k_m}\sum_{l=1}^{k_m}\left(\frac{l}{k_m} \right)^{q/d}  F\left( \frac{m}{2l}\right)
\end{split}
\end{equation*}
with $F(u) =  u^{q/d}\Exp[|X - \NN_{\mathbf{X}'_{ \lfloor u \rfloor }}(X) |^{q} ]$.
Let $\epsilon > 0$. By Theorem~\ref{theo:rates}, there exists $u_\epsilon \geq 0$ such that, for all $u \geq u_\epsilon$,
\begin{equation*}
\left| F(u)-  \frac{\Gamma(1+q/d)}{v_d^{q/d}}\Exp\left[\frac{1}{p_{X'}(X)^{q/d}}\right] \right| \leq \epsilon.
\end{equation*}
We can remark that for  $m \in \N^{*}$ and $  l \in \llbracket 1, k_m \rrbracket$, 
\begin{equation*}
  \frac{m}{2l} \geq \frac{m}{2k_m } \xrightarrow[m \rightarrow +\infty]{} +\infty.
\end{equation*}
Thus, if we take $m_{\epsilon}$ such that for all $m \geq m_{\epsilon}$,  $ \left\lfloor \frac{m}{2k_m} \right\rfloor \geq u_{\epsilon} $, we have
\begin{equation*}
\left| F\left(\frac{m}{2l} \right)-  \frac{\Gamma(1+q/d)}{v_d^{q/d}}\Exp\left[\frac{1}{p_{X'}(X)^{q/d}}\right] \right| \leq \epsilon
\end{equation*}
for any  $m \geq m_{\epsilon}$ and $l \leq k_m$. Consequently,
\begin{align*}
  &\left |\frac{1}{k_m}\sum_{l=1}^{k_m}\left(\frac{l}{k_m} \right)^{q/d}  \left( F\left(\frac{m}{2l} \right)-  \frac{\Gamma(1+q/d)}{v_d^{q/d}}\Exp\left[\frac{1}{p_{X'}(X)^{q/d}}\right] \right) \right| \leq \epsilon\left| \frac{1}{k_m}\sum_{l=1}^{k_m}\left(\frac{l}{k_m} \right)^{q/d}  \right|\leq \epsilon,
\end{align*}
so that
\begin{equation*}
  \lim_{m \to +\infty} \frac{2^{q/d+1}}{k_m}\sum_{l=1}^{k_m}\left(\frac{l}{k_m} \right)^{q/d}  F\left( \frac{m}{2l}\right) = c_{d,q}\frac{\Gamma(1+q/d)}{v_d^{q/d}}\Exp\left[\frac{1}{p_{X'}(X)^{q/d}}\right] ,
\end{equation*}
where
\begin{align*}
  c_{d,q} &:= \lim_{m \to +\infty} \frac{2^{q/d+1}}{k_m}\sum_{l=1}^{k_m}\left(\frac{l}{k_m} \right)^{q/d}\\
  &= \begin{cases}
    \displaystyle\frac{2^{q/d+1}}{k}\sum_{l=1}^k\left(\frac{l}{k} \right)^{q/d} & \text{if $\sup_{m \geq 1} k_m = k < +\infty$,}\\
    \displaystyle 2^{q/d+1} \int_0^1 u^{q/d}\dd u = \frac{2^{q/d+1}}{q/d+1} & \text{if $\sup_{m \geq 1} k_m = +\infty$,}
  \end{cases}
\end{align*}
because $k_m$ is nondecreasing. This concludes the proof.
\end{proof}

\begin{proof}[Proof of Lemma~\ref{lem:radass}]
Obviously, it suffices to check that $p_{X'}$ satisfies~\eqref{ass:ssupp:sma} in Assumption~\ref{ass:ssupp}. Let us denote by $\langle \cdot,\cdot\rangle$  and $\mathcal{B}(x,r)$  respectively the inner product and  the ball of center $x$  and radius $r$ associated to $\|\cdot\|$.
 We set $x_{0} = 0$ without loss of generality. As $h$ is positive and nonincreasing, we may fix $r_0>0$ and define
\begin{equation*}
  \bar{\kappa} := \frac{h(r_0)}{h(0)} \in (0,1].
\end{equation*}

If $\|x\|\leq r_{0}/2$, then for all $y \in \mathcal{B}(0,r_0/2)$, the monotonicity of $h$ ensures that $p_{X'}(x+y)\geq\bar{\kappa} p_{X'}(x)$. By the equivalence of the norms, there exist $C\geq c>0$ such that for any $x \in \R^{d}$ and any $r\geq0$, $\mathcal{B}(x,cr) \subset B(x,r) \subset \mathcal{B}(x,Cr)  $. Thus
\begin{equation*}
\forall r \leq r_{0}/2c, \qquad \Pr\left(X' \in B(x,r) \right) \geq \Pr\left(X' \in \mathcal{B}(x,cr) \right) \geq (c/C)^{d}v_{d}\bar{\kappa}p_{X'}(x)r^{d}.
\end{equation*}
If $\|x\|> r_{0}/2$, let us introduce the half-cone
\begin{equation*}
 \mathcal{C}_{x} = \left\{x' \in \R^d : \langle x'-x,-x\rangle\geq \frac{\|x'-x\|\|x\|}{2} \right\},
\end{equation*}
and notice that for all $r\leq r_{0}/2$ and $x' \in \mathcal{C}_{x} \cap \mathcal{B}(x,r)$,
\begin{equation*}
    \begin{split}
        \|x'\|^{2}& = \|x\|^{2}+\|x'-x\|^{2} +2 \langle x'-x,x \rangle \\
        & \leq \|x\|^{2}+\|x'-x\|^{2}-\|x'-x\|\|x\| \\
        & \leq  \|x\|^{2}+ \|x'-x\|^{2}-\|x'-x\|^{2} = \|x\|^2.
    \end{split}
\end{equation*}
Thus, for all $x'  \in \mathcal{C}_{x} \cap \mathcal{B}(x,r)$, $p_{X'}(x')\geq p_{X'}(x)$. For a given $r$, the sets $ \mathcal{C}_{x} \cap \mathcal{B}(x,r)$ have the same volume for all $x$, which we denote by $\alpha v_d r^d$ for some $\alpha \in (0,1/C^d)$. Finally, we have 
\begin{equation*}
\forall r \leq r_{0}/2c, \qquad \Pr(X' \in B(x,r))\geq\Pr(X' \in \mathcal{B}(x,cr)\cap \mathcal{C}_{x}) \geq \alpha c^d v_{d}p_{X'}(x)r^{d}.
\end{equation*}
If we take $\kappa = (c/C)^{d} \min(\alpha C^d,\bar{\kappa})$ and $r_{\kappa} = r_{0}/2c$, we obtain the point~\eqref{ass:ssupp:sma} of Assumption~\ref{ass:ssupp}.
\end{proof}

\section{Convergence of \texorpdfstring{$\hat{\QI}_{m,n}^{(k)}$}{} to \texorpdfstring{$\QI$}{}}
\label{sec:discussion}

This section is dedicated to the study of the convergence of $\hat{\QI}_{m,n}^{(k)}$ to $\QI$. As a preliminary step, we complement the results from Section~\ref{sec:convan} by deriving rates of convergence for the Wasserstein distance between $\hat{\mu}_{\mathbf{X}'_m}^{(k)}$ and $\mu_X$ in Subsection~\ref{sec:discconvtheo}. We then distinguish between the \emph{noiseless case} in which $Y=f(X)$, addressed in Subsection~\ref{sec:noiseless}, and the \emph{noisy case} $Y=f(X,\Theta)$, addressed in Subsection~\ref{sec:noisycase}.

\subsection{Convergence of \texorpdfstring{$\hat{\mu}_{\mathbf{X}'_m}^{(k)}$}{} to \texorpdfstring{$\mu_X$}{}}\label{sec:discconvtheo} Let us fix $q \in [1,+\infty)$ and use Jensen's inequality to write, for $k=k_m \in \llbracket 1, m\rrbracket$,
\begin{equation}\label{eq:decomperror}
  \Exp\left[W_q^q\left(\mu_X, \hat{\mu}_{\mathbf{X}'_m}^{(k_m)}\right)\right] \leq 2^{q-1}\left(\Exp\left[W_q^q\left(\mu_X, \hat{\mu}_{\mathbf{X}_n}\right)\right] + \Exp\left[W_q^q\left(\hat{\mu}_{\mathbf{X}_n}, \hat{\mu}_{\mathbf{X}'_m}^{(k_m)}\right)\right]\right).
\end{equation}
Under the assumptions of Corollary~\ref{corr:ratesk}, the second term has order of magnitude at most $(k_m/m)^{q/d}$. The study of the first term, namely the rate of convergence of the expected $W_q$ distance (taken to the power $q$) between the empirical measure of iid realizations and their common distribution, has been the subject of several works. Under the condition that there exists $s>2q$ such that $\Exp[|X|^s] < + \infty$, we have from \cite[Theorem 1]{fournier2015rate}
\begin{equation}\label{eq:fougui}
\Exp\left[ W_q^q\left(\mu_X, \hat{\mu}_{\mathbf{X}_n}\right) \right]= \begin{cases}
  \grandO\left( n^{-1/2} \right) & \text{if $q>d/2$,}\\
  \grandO\left( n^{-1/2}\log(1+n) \right) & \text{if $q=d/2$,}\\
  \grandO\left( n^{-q/d} \right) & \text{if $q<d/2$.}
\end{cases}
\end{equation}
These estimates may be improved if more assumptions are made on $\mu_X$. For example, if this measure possesses a lower and upper bounded density on some bounded subset of $\R^d$, then the rate is known to be $n^{-q/d}$ even if $q>d/2$~\cite{TriSle15}. This rate may even be improved if $\mu_X$ concentrates on a low-dimensional submanifold of $\R^d$~\cite{WeeBac19,Div21}, which is particularly relevant in the UQ context which motivates this study, see Remark~\ref{rk:abscont}. In order to make the use of our results as flexible as possible, from now on we shall denote by $(\tau_{q,d}(n))_{n \geq 1}$ a sequence such that
\begin{equation*}
  \Exp\left[ W_q^q\left(\mu_X, \hat{\mu}_{\mathbf{X}_n}\right) \right] = \grandO\left(\tau_{q,d}(n)\right),
\end{equation*}
and thus
\begin{equation*}
  \Exp\left[W_q^q\left(\mu_X, \hat{\mu}_{\mathbf{X}'_m}^{(k_m)}\right)\right] = \grandO\left(\tau_{q,d}(n)\right) + \grandO\left(\left(\frac{k_m}{m}\right)^{q/d}\right).
\end{equation*}
As is sketched in the discussion above, the precise order of $\tau_{q,d}(n)$ depends on properties of the measure $\mu_X$.

In the sequel, where we study the convergence of $\hat{\QI}_{m,n}^{(k)}$ to $\QI$, the $W_1$ distance plays a specific role, due to the Kantorovitch duality formula \cite[Remark 6.5]{villani2008optimal}
\begin{equation}\label{eq:duality}
 W_{1}\left(\mu, \nu\right) 
  = \sup_{|\varphi|_{\mathrm{Lip}}\leq 1} \left\{ \int_{\R^{d}} \varphi(x) \dd\mu(x) - \int_{\R^{d}} \varphi(x) \dd\nu(x) \right\},
\end{equation}
where $|\varphi|_\mathrm{Lip}$ denotes the Lipschitz constant of $\varphi$.

We shall need the following estimate.

\begin{lem}[$W_1^q$ estimate]\label{lem:W1q}
  If $\Exp[|X|^q] < +\infty$ with $q \geq 2$, then
  \begin{equation*}
    \Exp\left[W_1^q\left(\mu_X, \hat{\mu}_{\mathbf{X}_n}\right) \right] = \grandO\left(\frac{1}{n^{q/2}} + \tau_{1,d}(n)^q\right).
  \end{equation*}
\end{lem}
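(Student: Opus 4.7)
The plan is to split the desired bound into a ``mean'' contribution and a ``fluctuation'' contribution. From the elementary inequality $(a+b)^q \leq 2^{q-1}(|a|^q+|b|^q)$ applied with $a = W_1(\mu_X, \hat\mu_{\mathbf{X}_n}) - \Exp[W_1(\mu_X, \hat\mu_{\mathbf{X}_n})]$ and $b = \Exp[W_1(\mu_X, \hat\mu_{\mathbf{X}_n})]$, I would obtain
$$
\Exp\bigl[W_1^q(\mu_X, \hat\mu_{\mathbf{X}_n})\bigr] \leq 2^{q-1}\Bigl(\Exp\bigl[|W_1(\mu_X, \hat\mu_{\mathbf{X}_n}) - \Exp W_1(\mu_X, \hat\mu_{\mathbf{X}_n})|^q\bigr] + \Exp[W_1(\mu_X, \hat\mu_{\mathbf{X}_n})]^q\Bigr).
$$
The deterministic part is immediate: by the very definition of $\tau_{1,d}$, one has $\Exp[W_1(\mu_X, \hat\mu_{\mathbf{X}_n})] = \grandO(\tau_{1,d}(n))$, hence $\Exp[W_1]^q = \grandO(\tau_{1,d}(n)^q)$. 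The problem therefore reduces to showing that the centered $q$-th moment is of order $n^{-q/2}$.

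For this, I would exploit the Kantorovich--Rubinstein duality~\eqref{eq:duality}, which writes $W_1(\mu_X, \hat\mu_{\mathbf{X}_n})$ as the supremum, over $1$-Lipschitz test functions $\varphi$, of $\int \varphi \, \dd\mu_X - \tfrac{1}{n}\sum_{i=1}^n \varphi(X_i)$. As a supremum of averages of $1$-Lipschitz functions, $W_1$ inherits a per-coordinate Lipschitz property: replacing $X_k$ by an independent copy $X_k'$ perturbs $W_1$ by at most $|X_k - X_k'|/n$. I would then introduce the Doob martingale $Z_k := \Exp[W_1(\mu_X, \hat\mu_{\mathbf{X}_n}) \mid X_1, \ldots, X_k]$, so that $W_1 - \Exp W_1 = \sum_{k=1}^n (Z_k - Z_{k-1})$, and deduce from the above coordinate-Lipschitz estimate that
$$
|Z_k - Z_{k-1}| \leq \frac{|X_k| + \Exp[|X|]}{n}.
$$

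The final step is the Burkholder--Davis--Gundy inequality, which for $q \geq 2$ yields
$$
\Exp\bigl[|W_1 - \Exp W_1|^q\bigr] \leq C_q \Exp\left[\left(\sum_{k=1}^n (Z_k - Z_{k-1})^2\right)^{q/2}\right],
$$
after which Jensen's inequality applied to the convex map $u \mapsto u^{q/2}$ (using $q \geq 2$) bounds the right-hand side by at most $C_q n^{-q/2}\Exp[(|X| + \Exp[|X|])^q]$. The latter is finite and of order $n^{-q/2}$ under the moment assumption $\Exp[|X|^q]<+\infty$, which combined with the mean estimate gives the claim. The main conceptual obstacle is the derivation of the coordinate-Lipschitz property of $W_1$ from the duality formula; once the martingale increment bound is in place, the BDG--Jensen step is entirely routine.
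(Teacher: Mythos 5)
Your proposal is correct, and it follows the paper's overall strategy: the same splitting of $\Exp[W_1^q]$ into the centered $q$-th moment plus $\Exp[W_1]^q=\grandO(\tau_{1,d}(n)^q)$, and the same key ingredient, namely the per-coordinate Lipschitz bound $|W(x_1,\ldots,x_n)-W(x_1,\ldots,x_i^\flat,\ldots,x_n)|\leq |x_i-x_i^\flat|/n$ obtained from Kantorovich--Rubinstein duality and $|\sup f-\sup g|\leq\sup|f-g|$. Where you diverge is in the concentration tool used to get the $n^{-q/2}$ fluctuation bound: the paper applies the higher-order Efron--Stein moment inequality of Boucheron--Bousquet--Lugosi--Massart directly to the conditional quantities $V^\pm$, whereas you build the Doob martingale $Z_k=\Exp[W_1\mid X_1,\ldots,X_k]$, bound its increments by $(|X_k|+\Exp[|X|])/n$ (a valid step, since $Z_{k-1}$ can be rewritten as a conditional expectation of the resampled statistic), and then invoke Burkholder--Davis--Gundy followed by Jensen. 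Your bookkeeping is right: $\Exp[(\sum_k(Z_k-Z_{k-1})^2)^{q/2}]\leq n^{-q/2}\Exp[(|X|+\Exp[|X|])^q]$, which is finite under $\Exp[|X|^q]<+\infty$, so both routes deliver the same $\grandO(n^{-q/2})$ term. The two arguments are essentially interchangeable here; what matters in both, and what you correctly exploit, is that the coordinate differences are controlled by the random quantity $|X_k-X_k^\flat|/n$ rather than by a deterministic constant, so that no boundedness of the support is needed (a plain Azuma--McDiarmid bounded-differences argument would fail). The BDG route is more classical and self-contained, at the cost of introducing the martingale machinery; the paper's Efron--Stein route is a one-line application of a cited moment inequality that works directly with the conditional second moments.
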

\begin{proof}
  For any vector $(x_1, \ldots, x_n) \in (\R^d)^n$, let us define
  \begin{align*}
    W(x_1, \ldots, x_n) &:= W_1\left(\frac{1}{n}\sum_{i=1}^n \delta_{x_i}, \mu_X\right)\\
    & = \sup_{|\varphi|_{\mathrm{Lip}}\leq 1} \left\{ \frac{1}{n}\sum_{i=1}^n \varphi(x_i) - \int_{\R^{d}} \varphi(x) \mu_X (\dd x ) \right\}
  \end{align*}
  thanks to~\eqref{eq:duality}. Then for any $i \in \llbracket 1,n\rrbracket$ and $x^\flat_i \in \R^d$, using the identity above and the fact that $|\sup f - \sup g| \leq \sup |f-g|$, we get
  \begin{align*}
    &\left|W(x_1, \ldots, x_n) - W(x_1, \ldots, x_{i-1}, x^\flat_i, x_{i+1}, \ldots, x_n)\right|\\
    &\leq \sup_{|\varphi|_{\mathrm{Lip}}\leq 1} \left| \frac{\varphi(x_1) + \cdots + \varphi(x_n)}{n} - \frac{\varphi(x_1) + \cdots + \varphi(x_i^\flat) + \cdots + \varphi(x_n)}{n}\right|\\
    &= \frac{1}{n}\sup_{|\varphi|_{\mathrm{Lip}}\leq 1}\left|\varphi(x_i) - \varphi(x_i^\flat)\right|\\
    &= \frac{1}{n}|x_i-x^\flat_i|.
  \end{align*} 
  As a consequence, letting $\mathbf{X}_n = (X_1, \ldots, X_n)$ and $\mathbf{X}^\flat_n = (X^\flat_1, \ldots, X^\flat_n)$ be two independent samples from $\mu_X$, we deduce that the random variables $V^+$ and $V^-$ defined by 
  \begin{equation*}
    V^\pm := \Exp\left[\sum_{i=1}^n \left(W(X_1, \ldots, X_n) - W(X_1, \ldots, X_{i-1}, X^\flat_i, X_{i+1}, \ldots, X_n)\right)_\pm^2 | \mathbf{X}_n\right]
  \end{equation*}
  satisfy the bound
  \begin{equation*}
    V^\pm \leq \frac{1}{n^2}\sum_{i=1}^n \Exp\left[|X_i-X^\flat_i|^2 | \mathbf{X}_n\right]. 
  \end{equation*}
  As a consequence, for any $q \geq 2$ we have by Jensen's inequality 
  \begin{align*}
    \Exp\left[(V^\pm)^{q/2}\right] &\leq \frac{1}{n^{q/2}}\Exp\left[\left(\frac{1}{n}\sum_{i=1}^{n}\Exp\left[|X_i-X_i^\flat|^2 | X\right]\right)^{q/2}\right]\\
    & \leq \frac{1}{n^{q/2}} \Exp\left[|X-X^\flat|^q\right] \leq \frac{2^{q-1}}{n^{q/2}} \Exp\left[|X|^q\right].
  \end{align*} 
  
  We therefore deduce from the higher-order Efron--Stein inequality~\cite[Theorem~2]{BouBouLugMas05} that there exists a universal constant $C_q$ such that
  \begin{equation*}
    \Exp\left[\left|W(\mathbf{X}_n)-\Exp\left[W(\mathbf{X}_n)\right]\right|^q\right] \leq C_q \Exp\left[(V^-)^{q/2}+(V^+)^{q/2}\right] \leq \frac{2^qC_q}{n^{q/2}} \Exp\left[|X|^q\right].
  \end{equation*}
  We conclude the proof by writing, using Jensen's inequality again,
  \begin{align*}
    \Exp\left[W(\mathbf{X}_n)^q\right] &= \Exp\left[\left|W(\mathbf{X}_n)-\Exp\left[W(\mathbf{X}_n)\right] + \Exp\left[W(\mathbf{X}_n)\right]\right|^q\right]\\
    &\leq 2^{q-1}\left(\Exp\left[\left|W(\mathbf{X}_n)-\Exp\left[W(\mathbf{X}_n)\right]\right|^q\right] + \Exp\left[W(\mathbf{X}_n)\right]^q\right),
  \end{align*}
  which yields the claimed estimate.
\end{proof}

\begin{rk}
  With the estimates given by~\eqref{eq:fougui}, $\tau_{1,d}(n)$ is of order $n^{-1/d}$ as soon as $d \geq 3$. Thus, in this case, we get from Lemma~\ref{lem:W1q} that $\Exp[W_1^q(\mu_X, \hat{\mu}_{\mathbf{X}_n})]$ is of order $n^{-q/d}$. When $q>d/2$, this is a faster rate of decay to $0$ than what one would have obtained bounding $W_1$ by $W_q$. 
\end{rk}

\subsection{Rate of convergence of \texorpdfstring{$\hat{\QI}_{m,n}^{(k_m)}$}{} in the noiseless case}
\label{sec:noiseless} We assume that $Y = f(X)$ and study the rate of convergence of  $\hat{\QI}_{m,n}^{(k_m)}$ to $\QI$.
When $\phi \circ f $ is  $L$-Lipschitz continuous, we deduce from~\eqref{eq:duality} that
\begin{align*}
  \left| \QI - \hat{\QI}_{m,n}^{(k_m)} \right|^{q} &= \left|\int_{\R^{d}} \phi \circ f(x)  \dd\mu_X(x)  - \int_{\R^{d}} \phi \circ f(x) \dd\hat{\mu}_{\mathbf{X}'_m}^{(k_m)}(x)\right|^{q}\\
  & \leq L^{q}W_{1}^{q}\left(\mu_X,\hat{\mu}_{\mathbf{X}'_m}^{(k_m)}\right).
\end{align*}

We therefore obtain the following result.
\begin{prop}[Rates of convergence in the noiseless case]
\label{prop:ratenoiseless}
Assume that:
\begin{enumerate}[label=(\roman*),ref=\roman*]
  \item the function $f$ does not depend on $\Theta$,
  \item the function $\phi \circ f$ is globally Lipschitz continuous,
\end{enumerate} and let the assumptions of Corollary \ref{corr:ratesk} hold for some $q \geq 2$. Then
\begin{equation}
 \Exp\left[\left| \QI - \hat{\QI}_{m,n}^{(k_m)}  \right|^{q} \right]^{1/q} = \grandO\left(\frac{1}{\sqrt{n}} + \tau_{1,d}(n)\right) + \grandO\left( \left(\frac{k_m}{m} \right)^{1/d}\right).
\end{equation}
\end{prop}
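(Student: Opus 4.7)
The starting point is the Kantorovich-duality estimate
\begin{equation*}
\left|\QI - \hat{\QI}_{m,n}^{(k_m)}\right|^{q} \leq L^{q}\, W_{1}^{q}\!\left(\mu_X, \hat{\mu}_{\mathbf{X}'_m}^{(k_m)}\right),
\end{equation*}
already derived in the paragraph preceding the statement, which uses the Lipschitz continuity of $\phi \circ f$. After taking expectations, the task reduces to bounding $\Exp\bigl[W_{1}^{q}(\mu_X, \hat{\mu}_{\mathbf{X}'_m}^{(k_m)})\bigr]^{1/q}$, and then multiplying by $L$ at the end.

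The natural approach is to insert the evaluation empirical measure $\hat{\mu}_{\mathbf{X}_n}$ as an intermediate step via the triangle inequality for $W_1$:
\begin{equation*}
W_{1}\!\left(\mu_X, \hat{\mu}_{\mathbf{X}'_m}^{(k_m)}\right) \leq W_{1}(\mu_X, \hat{\mu}_{\mathbf{X}_n}) + W_{1}\!\left(\hat{\mu}_{\mathbf{X}_n}, \hat{\mu}_{\mathbf{X}'_m}^{(k_m)}\right).
\end{equation*}
Raising to the $q$-th power (with $q \geq 2$) and using the elementary inequality $(a+b)^{q} \leq 2^{q-1}(a^{q}+b^{q})$ splits the estimate into two expectations treated by already-proved results. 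For the first, Lemma~\ref{lem:W1q} directly gives $\Exp[W_{1}^{q}(\mu_X, \hat{\mu}_{\mathbf{X}_n})] = \grandO(n^{-q/2} + \tau_{1,d}(n)^{q})$. For the second, I would use the monotonicity $W_{1} \leq W_{q}$ (a one-line consequence of Hölder's inequality applied to any transport plan, valid because $q \geq 1$), which implies $W_{1}^{q} \leq W_{q}^{q}$, and then invoke Corollary~\ref{corr:ratesk} to obtain $\Exp\bigl[W_{1}^{q}(\hat{\mu}_{\mathbf{X}_n}, \hat{\mu}_{\mathbf{X}'_m}^{(k_m)})\bigr] = \grandO((k_m/m)^{q/d})$.

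Summing the two $\grandO$-terms, taking the $q$-th root, and using the subadditivity $(a+b)^{1/q} \leq a^{1/q} + b^{1/q}$ on nonnegative terms yields the announced bound. The proof is in essence a direct assembly of three ingredients already in hand — Kantorovich duality, Lemma~\ref{lem:W1q}, and Corollary~\ref{corr:ratesk} — so there is no genuine technical obstacle; the only point worth checking is that the moment hypothesis $\Exp[|X|^{q}] < +\infty$ required by Lemma~\ref{lem:W1q} is indeed in force, which follows from Assumption~\ref{ass:mom} as soon as $p_{X'}$ is bounded above on $\supp(\mu_X)$, something guaranteed in particular whenever $\supp(\mu_X)$ is compact and $p_{X'}$ is continuous on a neighbourhood of it.
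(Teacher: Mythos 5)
Your proof is correct and is essentially the paper's own argument: the Kantorovich duality bound stated just before the proposition, the triangle inequality through $\hat{\mu}_{\mathbf{X}_n}$ as in~\eqref{eq:decomperror}, Lemma~\ref{lem:W1q} for the first term, and $W_1 \leq W_q$ combined with Corollary~\ref{corr:ratesk} for the second. The moment condition $\Exp[|X|^q]<+\infty$ that you only verify under extra hypotheses (compact support, $p_{X'}$ bounded above) in fact always holds here: the strong minimal mass part of Assumption~\ref{ass:ssupp} gives $\kappa v_d r_\kappa^d\, p_{X'}(x) \leq \Pr\left(X' \in B(x,r_\kappa)\right) \leq 1$ for every $x \in U$, so $p_{X'}$ is bounded above on $U \supset \supp(\mu_X)$, and Assumption~\ref{ass:mom} then directly yields $\Exp[|X|^q]<+\infty$ without any compactness assumption.
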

There is no need for $k_m$ to go to infinity and thus $k_m=1$ is optimal.

These computations can be adapted to cases other than $\phi \circ f$ Lipschitz continuous. For instance, if $A \subset \R^{e}$, $\phi(y) = \ind{y \in A}$ and $f$ is globally Lipschitz continuous, it is possible to use the margin assumption of \cite{tsybakov2004optimal} to deduce theoretical rates of convergence in the estimation of $\QI = \Pr(Y \in A)$.

\subsection{Rate of convergence of \texorpdfstring{$\hat{\QI}_{m,n}^{(k_m)}$}{} in the noisy case}
\label{sec:noisycase}We now study the convergence of  $\hat{\QI}_{m,n}^{(k_m)}$ to $\QI$ when $Y = f(X,\Theta)$.
A first striking result is then that even under the assumptions of Theorem~\ref{theo:consist}, the estimator $\hat{\QI}_{m,n}^{(1)}$ need not be consistent. Indeed, consider the case where $X$ is actually deterministic and always equal to some $x_0 \in \R^d$. Then we have
\begin{equation*}
  \hat{\QI}_{m,n}^{(1)} = \frac{1}{n}\sum_{i=1}^n \phi(Y'_{j^{(1)}_i}),
\end{equation*}
where $j^{(1)}_i$ is the index of the closest $X'_j$ to $X_i$. But since $X_i=x_0$ for all $i$, all indices $j^{(1)}_i$ are equal to some $j^{(1)}$ and the estimator rewrites 
\begin{equation*}
  \hat{\QI}_{m,n}^{(1)} = \phi(Y'_{j^{(1)}}) = \phi(f(X'_{j^{(1)}}, \Theta_{j^{(1)}})).
\end{equation*}
While Assumption~\ref{ass:supp} ensures that $X'_{j^{(1)}}$ converges to $x_0$ when $m \to +\infty$, in general the corresponding sequence of $\Theta_{j^{(1)}}$ does not converge.

As is evidenced on this example, the presence of an atom in the law of $X$ makes the estimator $\hat{\QI}_{m,n}^{(1)}$ depend on a single realization of $\Theta$ and therefore prevents this estimator from displaying an averaging behavior with respect to the law of $\Theta$. In Proposition~\ref{prop:consistQI1}, we clarify this point by exhibiting a necessary and sufficient condition for the estimator $\hat{\QI}_{m,n}^{(1)}$ to be consistent, while in Proposition~\ref{prop:consisnoisycase}, we show that replacing $\hat{\QI}_{m,n}^{(1)}$ with $\hat{\QI}_{m,n}^{(k_m)}$ with $k_m \to +\infty$ allows to recover such an averaging behavior and makes the estimator consistent, even when $\mu_X$ has atoms. In the latter case, we also provide rates of convergence in Proposition~\ref{prop:ratenoisy}.

We recall that $\psi(x) = \Exp[\phi(f(x,\Theta))]$ is defined in Equation~\eqref{def:psi}. In the next statement, we denote by $\mathcal{A}_X$ the set of atoms of $\mu_X$, that is to say the set of $x \in \R^d$ such that $\Pr(X=x)>0$, and introduce the notation 
\begin{equation*}
  \vartheta(x) := \Var(\phi(f(x,\Theta))).
\end{equation*}

\begin{prop}[Consistency of the $1$-NN in the noisy case]\label{prop:consistQI1} Assume that:
 \begin{enumerate}[label=(\roman*),ref=\roman*]
   \item the function $\phi$ is bounded,
   \item the function $\psi$ is globally Lipschitz continuous,
   \item the function $\vartheta$ is continuous,
 \end{enumerate}
 and let the assumptions of Theorem~\ref{theo:consist} hold. We have
 \begin{equation*}    
   \Exp \left[ \left|\hat{\QI}_{m,n}^{(1)} -  \QI \right| \right]  \xrightarrow[m,n \rightarrow +\infty]{} 0
 \end{equation*}
 if and only if
 \begin{equation*}
 \forall x \in \mathcal{A}_X, \qquad \vartheta(x)=0.
 \end{equation*}
\end{prop}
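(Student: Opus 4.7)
The plan is to decompose
\begin{equation*}
  \hat{\QI}_{m,n}^{(1)} - \QI = A_{m,n} + B_{m,n} + C_n
\end{equation*}
with
\begin{equation*}
  A_{m,n} = \frac{1}{n}\sum_{i=1}^n \bigl[\phi(Y'_{j^{(1)}_i}) - \psi(X'_{j^{(1)}_i})\bigr], \quad B_{m,n} = \frac{1}{n}\sum_{i=1}^n \bigl[\psi(X'_{j^{(1)}_i}) - \psi(X_i)\bigr], \quad C_n = \frac{1}{n}\sum_{i=1}^n \psi(X_i) - \QI,
\end{equation*}
so that $A_{m,n}$ isolates the conditional noise in $\Theta$, $B_{m,n}$ is the NN regression error on $\psi$, and $C_n$ is the Monte Carlo error. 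Boundedness of $\phi$ makes $\psi$ bounded, so $\Exp[C_n^2] \to 0$ by the classical law of large numbers. The Lipschitz assumption on $\psi$ gives $\Exp[|B_{m,n}|] \leq L\,\Exp[|X - \NN_{\mathbf{X}'_m}(X)|]$, which vanishes by Theorem~\ref{theo:consist} applied with $q=1$.

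The heart of the argument is the treatment of $A_{m,n}$. Writing $N_j = \#\{i : j^{(1)}_i = j\}$ and using that the $\Theta_j$'s are i.i.d.\ and independent of $(\mathbf{X}_n,\mathbf{X}'_m)$, one has $\Exp[A_{m,n} \mid \mathbf{X}_n, \mathbf{X}'_m] = 0$ and
\begin{equation*}
  \Exp[A_{m,n}^2] = \Exp\left[\frac{1}{n^2}\sum_{j=1}^m N_j^2 \vartheta(X'_j)\right] = \frac{1}{n}\Exp\bigl[\vartheta(\NN_{\mathbf{X}'_m}(X_1))\bigr] + \frac{n-1}{n}\Exp\bigl[\iind{\{j^{(1)}_1=j^{(1)}_2\}}\vartheta(\NN_{\mathbf{X}'_m}(X_1))\bigr].
\end{equation*}
The first term is $\grandO(1/n)$ by boundedness of $\vartheta$. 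For the second, I would split on $\{X_1 = X_2\}$ versus $\{X_1 \neq X_2\}$. On $\{X_1 = X_2\}$, which has positive probability precisely when $\mu_X$ has atoms, the common value almost surely lies in $\mathcal{A}_X$, so the assumption $\vartheta \equiv 0$ on $\mathcal{A}_X$, combined with continuity of $\vartheta$ and the a.s.\ convergence $\NN_{\mathbf{X}'_m}(X_1) \to X_1$ from Assumption~\ref{ass:supp} (Lemma~2.2 in \cite{biau2015lectures}), yields vanishing by dominated convergence. On $\{X_1 \neq X_2\}$, the same a.s.\ convergence applied to $X_1$ and $X_2$ separately forces $\NN_{\mathbf{X}'_m}(X_1) \neq \NN_{\mathbf{X}'_m}(X_2)$ for $m$ large enough, so $\iind{\{j^{(1)}_1 = j^{(1)}_2\}} \to 0$ almost surely on this event and dominated convergence concludes. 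This establishes $\Exp[A_{m,n}^2] \to 0$, hence the ``if'' direction.

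For the converse, suppose some $x_0 \in \mathcal{A}_X$ satisfies $\vartheta(x_0) > 0$ and set $p = \Pr(X = x_0) > 0$. Every $X_i$ equal to $x_0$ shares the common nearest neighbor $X'_{j^\ast} := \NN_{\mathbf{X}'_m}(x_0)$, so $N_{j^\ast} \geq M_n := \#\{i : X_i = x_0\}$. Retaining only the $j^\ast$-term in $\sum_j N_j^2 \vartheta(X'_j)$ and exploiting the independence of $M_n$ (a function of $\mathbf{X}_n$) and $X'_{j^\ast}$ (a function of $\mathbf{X}'_m$) gives
\begin{equation*}
  \Exp[A_{m,n}^2] \geq \Exp\bigl[M_n^2/n^2\bigr]\,\Exp\bigl[\vartheta(X'_{j^\ast})\bigr] \xrightarrow[m,n \to \infty]{} p^2 \vartheta(x_0) > 0,
\end{equation*}
since $M_n/n \to p$ in $L^2$ and $\vartheta(X'_{j^\ast}) \to \vartheta(x_0)$ by continuity and $\NN_{\mathbf{X}'_m}(x_0) \to x_0$ almost surely. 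The orthogonality of $A_{m,n}$ with $B_{m,n}+C_n$ in $L^2$ transfers this lower bound to $\Exp[(\hat{\QI}_{m,n}^{(1)} - \QI)^2]$, and the bound $|\hat{\QI}_{m,n}^{(1)} - \QI| \leq 2\|\phi\|_\infty$ turns the inequality $(\hat{\QI}_{m,n}^{(1)} - \QI)^2 \leq 2\|\phi\|_\infty\,|\hat{\QI}_{m,n}^{(1)} - \QI|$ into a positive lower bound for $\Exp[|\hat{\QI}_{m,n}^{(1)} - \QI|]$. The main obstacle is the pairwise case analysis in the second paragraph, in particular the a.s.\ separation of Voronoi cells of two independent atomless evaluation points, which is what ultimately pinpoints the role of $\mathcal{A}_X$.
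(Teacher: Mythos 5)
Your proposal is correct and follows essentially the same route as the paper's proof: the same split into a conditional-noise term and a Lipschitz/Wasserstein-plus-Monte-Carlo term, the same second-moment computation producing the $\frac{1}{n}\Exp[\vartheta(\NN_{\mathbf{X}'_m}(X_1))]+\frac{n-1}{n}\Exp[\ind{j^{(1)}_1=j^{(1)}_2}\vartheta(\NN_{\mathbf{X}'_m}(X_1))]$ structure, and the same use of almost sure nearest-neighbor convergence with continuity of $\vartheta$ and dominated convergence, with the atom/non-atom dichotomy playing the identical role. The only difference is cosmetic packaging: the paper splits the noise term according to $X_i\in\mathcal{A}^+_X$ and shows the two pieces behave oppositely, while you lower-bound the full noise term at a single atom via the binomial count and transfer it to $\Exp[|\hat{\QI}_{m,n}^{(1)}-\QI|]$ through $L^2$ orthogonality and boundedness, which is equally valid.
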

In particular, under the above assumptions, if the law of $X$ has no atom, \ie $\mathcal{A}_X = \emptyset$, then $\hat{\QI}_{m,n}^{(1)}$ converges to $\QI$.
\begin{proof}
Let us write
\begin{equation*}
\hat{\QI}_{m,n}^{(1)} - \QI = \left(\hat{\QI}_{m,n}^{(1)} - \tilde{\QI}_{m,n}^{(1)}\right) + \left(\tilde{\QI}_{m,n}^{(1)}- \QI\right),
\end{equation*}
with
\begin{equation*}
\tilde{\QI}_{m,n}^{(1)}=   \frac{1}{m}\sum_{j=1}^m w^{(1)}_j \psi(X'_j).
\end{equation*}
Using the Lipschitz continuity of $\psi$, the duality formula~\eqref{eq:duality} and Theorem~\ref{theo:consist}, we get that $\tilde{\QI}_{m,n}^{(1)}- \QI$ converges to $0$ when $m, n \to +\infty$, in $L^1$. Therefore, $\hat{\QI}_{m,n}^{(1)} - \QI$ converges to $0$ if and only if $\hat{\QI}_{m,n}^{(1)} - \tilde{\QI}_{m,n}^{(1)}$ converges to $0$.

Let us rewrite
\begin{align*}
  \hat{\QI}_{m,n}^{(1)} - \tilde{\QI}_{m,n}^{(1)} &= \frac{1}{m}\sum_{j=1}^m w^{(1)}_j \left(\phi\left(f\left(X'_j,\Theta_j\right)\right)-\psi\left(X'_j\right)\right)\\
  &= \frac{1}{n}\sum_{i=1}^n \left(\phi\left(f\left(X'_{j^{(1)}_i},\Theta_{j^{(1)}_i}\right)\right)-\psi\left(X'_{j^{(1)}_i}\right)\right),
\end{align*} 
introduce the notation
\begin{equation*}
  \mathcal{A}^+_X := \{x \in \mathcal{A}_X : \vartheta(x) > 0\},
\end{equation*}
and denote
\begin{align*}
  e_1 &:= \frac{1}{n}\sum_{i=1}^n \left(\phi(f(X'_{j^{(1)}_i},\Theta_{j^{(1)}_i}))-\psi(X'_{j^{(1)}_i})\right)\ind{X_i \not\in \mathcal{A}^+_X},\\
  e_2 &:= \frac{1}{n}\sum_{i=1}^n \left(\phi(f(X'_{j^{(1)}_i},\Theta_{j^{(1)}_i}))-\psi(X'_{j^{(1)}_i})\right)\ind{X_i \in \mathcal{A}^+_X}.
\end{align*}
In Step~1 below, we prove that
\begin{equation*}
  \Exp\left[\left|e_1 \right|\right] \xrightarrow[n,m \rightarrow +\infty]{} 0,
\end{equation*} 
demonstrating at the same time the direct implication of the convergence when $\mathcal{A}^+_X = \emptyset$. In Step~2, we show that if $\mathcal{A}^+_X\not=\emptyset$ then $\Exp[|e_2|]$ does not converge to $0$, which implies that in this case, $\hat{\QI}_{m,n}^{(1)} - \tilde{\QI}_{m,n}^{(1)}$ does not converge to $0$ in $L^1$. 

In both steps, we shall use the following preliminary remark: given a measurable subset $\mathcal{A}$ of $\R^d$, taking the conditional expectation with respect to $(\mathbf{X}_n,\mathbf{X}'_m)$ it is easy to see that for $i \in \llbracket 1,n\rrbracket$,
\begin{equation*}
  \Exp\left[(\phi(f(X'_{j^{(1)}_i},\Theta_{j^{(1)}_i}))-\psi(X'_{j^{(1)}_i}))\ind{X_i \in \mathcal{A}}\right] = 0,
\end{equation*}
and for $(i_1, i_2) \in \llbracket 1, n\rrbracket^{2}$, 
\begin{align*}
  &\Exp\left[(\phi(f(X'_{j^{(1)}_{i_1}},\Theta_{j^{(1)}_{i_1}}))-\psi(X'_{j^{(1)}_i{}_1}))\ind{X_{i_1} \in \mathcal{A}}(\phi(f(X'_{j^{(1)}_{i_2}},\Theta_{j^{(1)}_{i_2}}))-\psi(X'_{j^{(1)}_{i_2}}))\ind{X_{i_2} \in \mathcal{A}}\right]\\
  & = \Exp\left[\ind{j^{(1)}_{i_1}=j^{(1)}_{i_2}}\vartheta(X'_{j^{(1)}_{i_1}})\ind{X_{i_1} \in \mathcal{A}, X_{i_2} \in \mathcal{A}}\right].
\end{align*}
Therefore,
\begin{align*}
  \Exp[|e_1|^2] &= \frac{1}{n^2}\left(\sum_{i=1}^n\Exp\left[\vartheta(X'_{j^{(1)}_i})\ind{X_i\not\in \mathcal{A}^+_X}\right]+\sum_{i_1 \not= i_2} \Exp\left[\ind{j^{(1)}_{i_1}=j^{(1)}_{i_2}}\vartheta(X'_{j^{(1)}_{i_1}})\ind{X_{i_1} \not\in \mathcal{A}^+_X, X_{i_2} \not\in \mathcal{A}^+_X}\right]\right)\\
  &= \frac{1}{n}\Exp\left[\vartheta(X'_{j^{(1)}_1})\ind{X_1\not \in \mathcal{A}^+_X}\right] + \frac{n-1}{n} \Exp\left[\ind{j^{(1)}_1=j^{(1)}_2}\vartheta(X'_{j^{(1)}_1})\ind{X_1 \not\in \mathcal{A}^+_X, X_2 \not\in \mathcal{A}^+_X}\right],
\end{align*}
and a similar expression holds for $\Exp[|e_2|^2]$.

\emph{Step~1.} Thanks to the boundedness of $\phi$, and thus of $\vartheta$, it is immediate that
\begin{equation*}
  \frac{1}{n}\Exp[\vartheta(X'_{j^{(1)}_1})\ind{X_1\not\in \mathcal{A}^+_X}] \xrightarrow[n \rightarrow +\infty]{} 0,
\end{equation*}
uniformly in $m$. Therefore, to show that $\Exp[|e_1|^2]$ converges to $0$, it suffices to prove that
\begin{equation*}
  \Exp\left[\ind{j^{(1)}_1=j^{(1)}_2}\vartheta(X'_{j^{(1)}_1})\ind{X_1 \not\in \mathcal{A}^+_X, X_2 \not\in \mathcal{A}^+_X}\right] \xrightarrow[m \rightarrow +\infty]{} 0.
\end{equation*}
In this purpose, let us first write
\begin{align*}
  &\Exp\left[\ind{j^{(1)}_1=j^{(1)}_2}\vartheta(X'_{j^{(1)}_1})\ind{X_1 \not\in \mathcal{A}^+_X, X_2 \not\in \mathcal{A}^+_X}\right]\leq \Exp\left[\ind{\NN_{\mathbf{X}'_m}(X_1)=\NN_{\mathbf{X}'_m}(X_2) } \vartheta(\NN_{\mathbf{X}'_m}(X_1))\ind{X_1 \not\in \mathcal{A}^+_X}\right],
\end{align*}
and recall that, by Assumption~\ref{ass:supp} and Lemma 2.2 in \cite[Chapter 2]{biau2015lectures}, $\NN_{\mathbf{X}'_m}(X_1)$ converges to $X_1$ and $\NN_{\mathbf{X}'_m}(X_2)$ converges to $X_2$, almost surely. As a consequence, if $X_1 \in \mathcal{A}_X \setminus \mathcal{A}^+_X$ then $\vartheta(X_1)=0$ and by the continuity of $\vartheta$ and the boundedness of $\phi$, the dominated convergence theorem shows that 
\begin{equation*}
  \Exp\left[\ind{X_1 \in \mathcal{A}_X \setminus \mathcal{A}^+_X}\ind{\NN_{\mathbf{X}'_m}(X_1)=\NN_{\mathbf{X}'_m}(X_2) } \vartheta(\NN_{\mathbf{X}'_m}(X_1))\right] \xrightarrow[m \rightarrow +\infty]{} 0.
\end{equation*}
On the other hand, if $X_1 \not\in \mathcal{A}_X$, then almost surely $X_1\not=X_2$, and therefore $\ind{\NN_{\mathbf{X}'_m}(X_1)=\NN_{\mathbf{X}'_m}(X_2) }$ converges to $0$ almost surely. Using the boundedness of $\phi$ and the dominated convergence theorem again, we deduce that
\begin{equation*}
  \Exp\left[\ind{X_1 \not\in \mathcal{A}_X}\ind{\NN_{\mathbf{X}'_m}(X_1)=\NN_{\mathbf{X}'_m}(X_2) } \vartheta(\NN_{\mathbf{X}'_m}(X_1))\right] \xrightarrow[m \rightarrow +\infty]{} 0,
\end{equation*}
which shows that $\Exp[|e_1|^2]$, and thus $\Exp[|e_1|]$, converge to $0$.

\emph{Step~2.} Let us now assume that $\mathcal{A}^+_X$ is nonempty and show that $e_2$ does not converge to $0$ in $L^1$. We shall actually prove that $e_2$ does not converge to $0$ in $L^2$: since $e_2$ is bounded then this prevents the convergence from occuring in $L^1$. From the preliminary remark, we write
\begin{align*}
  \Exp[|e_2|^2] &= \frac{1}{n}\Exp\left[\vartheta(X'_{j^{(1)}_1})\ind{X_1\in \mathcal{A}^+_X}\right]+ \frac{n-1}{n} \Exp\left[\ind{j^{(1)}_1=j^{(1)}_2}\vartheta(X'_{j^{(1)}_1})\ind{X_1 \in \mathcal{A}^+_X, X_2 \in \mathcal{A}^+_X}\right],
\end{align*}
and we prove that
\begin{equation*}
  \liminf_{m \to +\infty} \Exp\left[\ind{j^{(1)}_1=j^{(1)}_2}\vartheta(X'_{j^{(1)}_1})\ind{X_1 \in \mathcal{A}^+_X, X_2 \in \mathcal{A}^+_X}\right] > 0.
\end{equation*}
Let $x \in \mathcal{A}_X^+$. Obviously,
\begin{align*}
  \Exp\left[\ind{j^{(1)}_1=j^{(1)}_2}\vartheta(X'_{j^{(1)}_1})\ind{X_1 \in \mathcal{A}^+_X, X_2 \in \mathcal{A}^+_X}\right] &\geq \Exp\left[\ind{j^{(1)}_1=j^{(1)}_2}\vartheta(X'_{j^{(1)}_1})\ind{X_1 = X_2 = x}\right]\\
  &= \Exp\left[\vartheta(\NN_{\mathbf{X}'_m}(x))\ind{X_1 = X_2 = x}\right].
\end{align*}
By Assumption~\ref{ass:supp} and Lemma 2.2 in \cite[Chapter 2]{biau2015lectures} again, $\NN_{\mathbf{X}'_m}(x)$ converges to $x$ almost surely, therefore using the continuity and boundedness assumptions on $\vartheta$, the dominated convergence theorem shows that
\begin{equation*}
  \Exp\left[\vartheta(\NN_{\mathbf{X}'_m}(x))\ind{X_1 = X_2 = x}\right] \xrightarrow[m \rightarrow +\infty]{} \vartheta(x)\mu_X(\{x\})^2 > 0,
\end{equation*}
which completes the proof.
\end{proof}

We now study the estimator  $\hat{\QI}_{m,n}^{(k_m)}$ and show that it is unconditionnally consistent as soon as $k_m \rightarrow + \infty$. We provide $L^{2}$ convergence rates in Proposition~\ref{prop:ratenoisy}. 
\begin{prop}[Consistency in the noisy case]
\label{prop:consisnoisycase}
 Assume that 
 \begin{enumerate}[label=(\roman*),ref=\roman*]
   \item the function $\phi$ is bounded,
   \item the function $\psi$ is globally Lipschitz continuous,
 \end{enumerate} 
 and let the assumptions of Theorem~\ref{theo:consist} hold with $q=2$. As soon as $k_m$ goes to infinity with $m$ and $k_m/m \to 0$, we have 
\begin{equation*}
 \Exp\left[\left| \hat{\QI}_{m,n}^{(k_m)} - \QI \right|^{2}\right]^{1/2}  \xrightarrow[n,m \rightarrow + \infty]{} 0.
\end{equation*}
\end{prop}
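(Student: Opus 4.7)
Following the strategy of Proposition~\ref{prop:consistQI1}, the plan is to decompose
\begin{equation*}
  \hat{\QI}_{m,n}^{(k_m)} - \QI = A_{m,n} + B_{m,n}^{(1)} + B_{m,n}^{(2)}
\end{equation*}
with
\begin{equation*}
  A_{m,n} := \hat{\QI}_{m,n}^{(k_m)} - \tilde{\QI}_{m,n}^{(k_m)}, \qquad B_{m,n}^{(1)} := \tilde{\QI}_{m,n}^{(k_m)} - \frac{1}{n}\sum_{i=1}^n \psi(X_i), \qquad B_{m,n}^{(2)} := \frac{1}{n}\sum_{i=1}^n\psi(X_i) - \QI,
\end{equation*}
where $\tilde{\QI}_{m,n}^{(k_m)} := \frac{1}{m}\sum_{j=1}^m w_j^{(k_m)} \psi(X'_j)$ is the version of $\hat{\QI}_{m,n}^{(k_m)}$ in which the noisy observations $\phi(Y'_j)$ are replaced by their conditional expectations $\psi(X'_j)$. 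By Minkowski's inequality, it suffices to show that each of the three terms vanishes in $L^2$ as $m,n \to +\infty$.

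The Monte Carlo term $B_{m,n}^{(2)}$ is an i.i.d average of centered bounded variables, so $\Exp[(B_{m,n}^{(2)})^2] \leq \|\phi\|_\infty^2/n$. For the reweighting term $B_{m,n}^{(1)}$, the Kantorovich duality~\eqref{eq:duality} together with the Lipschitz continuity of $\psi$ gives
\begin{equation*}
  |B_{m,n}^{(1)}| \leq |\psi|_{\mathrm{Lip}} W_1\bigl(\hat{\mu}_{\mathbf{X}_n}, \hat{\mu}_{\mathbf{X}'_m}^{(k_m)}\bigr) \leq |\psi|_{\mathrm{Lip}} W_2\bigl(\hat{\mu}_{\mathbf{X}_n}, \hat{\mu}_{\mathbf{X}'_m}^{(k_m)}\bigr),
\end{equation*}
and Theorem~\ref{theo:consist} applied with $q=2$ directly yields $\Exp[(B_{m,n}^{(1)})^2] \to 0$.

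The crucial step is to control the noise term $A_{m,n}$. Setting $Z_j := \phi(f(X'_j,\Theta_j)) - \psi(X'_j)$, one has $A_{m,n} = \frac{1}{m}\sum_j w_j^{(k_m)} Z_j$. Conditionally on $(\mathbf{X}_n, \mathbf{X}'_m)$, the weights $w_j^{(k_m)}$ are deterministic, while the $Z_j$ remain independent and centered with variance $\vartheta(X'_j) \leq \|\phi\|_\infty^2$, because the noises $\Theta_1,\ldots,\Theta_m$ are independent and independent of $(\mathbf{X}_n,\mathbf{X}'_m)$. Hence
\begin{equation*}
  \Exp[A_{m,n}^2] \leq \|\phi\|_\infty^2 \,\Exp\!\left[\frac{1}{m^2}\sum_{j=1}^m (w_j^{(k_m)})^2\right].
\end{equation*}
Writing $w_j^{(k_m)} = (m/k_m n)\, N_j$ with $N_j := \sum_{i=1}^n \sum_{l=1}^{k_m}\ind{j = j_i^{(l)}}$, I observe that for each fixed $i$ the indices $j_i^{(1)},\ldots,j_i^{(k_m)}$ are pairwise distinct, whence $N_j \leq n$ and $\sum_j N_j = nk_m$; thus $\sum_j N_j^2 \leq n^2 k_m$ and consequently $\frac{1}{m^2}\sum_j (w_j^{(k_m)})^2 \leq 1/k_m$. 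This yields $\Exp[A_{m,n}^2] \leq \|\phi\|_\infty^2/k_m$, which vanishes since $k_m \to +\infty$.

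The main obstacle is precisely this variance bound on $A_{m,n}$: the $1$-NN analysis in Proposition~\ref{prop:consistQI1} breaks down at atoms of $\mu_X$ because no averaging of the noise $\Theta_j$ takes place. The cure is the elementary combinatorial identity on the counts $N_j$, which quantifies how the mass of the weights $w_j^{(k_m)}$ spreads out as $k_m$ grows and ensures a law of large numbers effect on the noise across at least $k_m$ independent $\Theta_j$'s. Once this is established, Minkowski's inequality combines the three contributions to deliver the claimed $L^2$ convergence.
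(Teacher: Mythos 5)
Your proof is correct. The treatment of the noise term $A_{m,n}$ coincides with the paper's: conditioning on $(\mathbf{X}_n,\mathbf{X}'_m)$ kills the cross terms, and your counting argument ($N_j\leq n$, $\sum_j N_j = nk_m$, hence $\sum_j (w^{(k_m)}_j)^2\leq m^2/k_m$) is the same bound the paper obtains by noting that for fixed $i_1,i_2,l_1$ at most one $l_2\leq k_m$ satisfies $j^{(l_1)}_{i_1}=j^{(l_2)}_{i_2}$; both give $\Exp[A_{m,n}^2]=\grandO(1/k_m)$. Where you genuinely diverge is the bias term. The paper uses the two-term decomposition $\hat{\QI}-\tilde{\QI}$ plus $\tilde{\QI}-\QI$ and controls the latter by $W_1(\mu_X,\hat{\mu}^{(k_m)}_{\mathbf{X}'_m})$, which forces it to bound $\Exp[W_1^2(\mu_X,\hat{\mu}_{\mathbf{X}_n})]$ via the Efron--Stein argument of Lemma~\ref{lem:W1q} together with a convergence-in-probability plus uniform-integrability argument for $\Exp[W_1(\mu_X,\hat{\mu}_{\mathbf{X}_n})]$. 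You instead insert the intermediate empirical average $\frac1n\sum_i\psi(X_i)$: the piece $B^{(1)}_{m,n}$ is handled exactly as in the paper through Kantorovich duality, $W_1\leq W_2$ and Theorem~\ref{theo:consist} with $q=2$, while the piece $B^{(2)}_{m,n}$ becomes a plain Monte Carlo error of the bounded function $\psi$ (boundedness following from that of $\phi$), giving $\grandO(1/n)$ with no concentration machinery. Your route is thus more elementary for the consistency statement; the paper's detour through $W_1(\mu_X,\hat{\mu}_{\mathbf{X}_n})$ is what it reuses to express the rate in Proposition~\ref{prop:ratenoisy} in terms of $\tau_{1,d}(n)$, which can beat your $1/\sqrt{n}$ term when $\mu_X$ is nicely structured, but for the mere convergence claimed here both arguments are complete and valid.
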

\begin{proof}
 We decompose the error as
\begin{equation}\label{eq:decompnoisy}
  \hat{\QI}_{m,n}^{(k_m)} - \QI = \left( \hat{\QI}_{m,n}^{(k_m)}- \tilde{\QI}_{m,n}^{(k_m)}\right) +\left( \tilde{\QI}_{m,n}^{(k_m)} - \QI\right) ,
\end{equation}
with 
\begin{equation*}
  \tilde{\QI}_{m,n}^{(k_m)} = \frac{1}{m}\sum_{j=1}^m w^{(k_m)}_j \psi(X'_j).
\end{equation*}
As $\psi$ is globally Lipschitz continuous and does not depend on $\Theta$, we have
\begin{equation*}
    \begin{split}
     \Exp\left[\left( \tilde{\QI}_{m,n}^{(k_m)} - \QI\right)^{2} \right]^{1/2} & \leq L \Exp\left[W_1^{2}\left(\mu_X,\hat{\mu}_{\mathbf{X}'_m}^{(k_m)}\right) \right]^{1/2} \\
     & \leq 2L \left(\Exp\left[W_1^{2}\left(\mu_X,\hat{\mu}_{\mathbf{X}_n} \right)\right] +\Exp \left[W_1^{2}(\hat{\mu}_{\mathbf{X}_n},\hat{\mu}_{\mathbf{X}'_m}^{(k_m)}) \right]\right)^{1/2}
    \end{split} 
\end{equation*} 
by Jensen's inequality, with $L$ the Lipschitz constant of $\psi$.
The second term is bounded from above by $\Exp[W_{2}^{2}(\hat{\mu}_{\mathbf{X}_n},\hat{\mu}_{\mathbf{X}'_m}^{(k_m)})]$, which goes to $0$ by Theorem~\ref{theo:consist}. For the first term, the same arguments as in the proof of Lemma~\ref{lem:W1q} show that
\begin{equation*}
  \Exp\left[W_1^{2}\left(\mu_X,\hat{\mu}_{\mathbf{X}_n} \right)\right] = \grandO\left(\frac{1}{n} + \Exp\left[W_1\left(\mu_X,\hat{\mu}_{\mathbf{X}_n} \right)\right]^2\right).
\end{equation*}
Since $W_1(\mu_X,\hat{\mu}_{\mathbf{X}_n})$ converges to $0$ in probability~\cite{panaretos2019statistical} and the assumption that $\Exp[|X|^2] < +\infty$ ensures that this sequence is uniformly integrable, we deduce that its expectation converges to $0$~\cite[Section 5]{billingsley1999convergence}. Thus, the second part of the right-hand side of~\eqref{eq:decompnoisy} converges to $0$ in $L^2$.

Let us consider the first part in the right-hand side of~\eqref{eq:decompnoisy}. We write the quadratic error
\begin{equation*}
    \begin{split}
   &\Exp\left[ \left| \hat{\QI}_{m,n}^{(k_m)}- \tilde{\QI}_{m,n}^{(k_m)}\right|^{2}\right]\\&= \Exp\left[\left( \frac{1}{m}\sum_{j=1}^{m}w_{j}^{(k_m)}(\psi(X'_{j}) - \phi(f(X'_{j},\Theta_{j})))\right)^{2} \right]  
       \\ & = \Exp\left[\frac{1}{m^{2}}\sum_{j=1}^{m} w_{j}^{(k_m)2}\left(\psi(X'_{j}) - \phi(f(X'_{j},\Theta_{j}))\right)^{2} \right] \\
       & +  \Exp\left[\frac{m-1}{m^{2}}\sum_{j\not=l} w_{j}^{(k_m)} w_{l}^{(k_m)}\left(\psi(X'_{j}) - \phi(f(X'_{j},\Theta_{j}))\right) \left( \psi(X'_{l}) - \phi(f(X'_{l},\Theta_{l}))\right)\right].
    \end{split}
\end{equation*}
Using the fact that $\Exp[w^{(k_m)}_{j}f(X'_{j},\Theta_{j}) |\mathbf{X}_n,\mathbf{X}'_m] = w^{(k_m)}_{j} \psi(X'_{j})$ by definition and the independence of the $\Theta_{j}$, the cross terms vanish. The remaining quadratic term is
\begin{equation}
\begin{split}
&\Exp\left[\frac{1}{m^{2}}\sum_{j=1}^{m} w_{j}^{(k_m)2}\left(\psi(X'_{j}) - \phi(f(X'_{j},\Theta_{j}))\right)^{2} \right]\\
 & =
\frac{1}{m^{2}}\sum_{j=1}^{m}\Exp\left[w^{(k_m)2}_j(\psi(X'_j) - \phi(f(X'_{j},\Theta)))^{2}  \right] \\
&\leq \frac{4}{m^{2}}\sum_{j=1}^{m}\Exp\left[\left(w^{(k_m)}_j\right)^{2}\right]\|\phi\|^{2}_{\infty}.
 \end{split}
 \end{equation}
 We remark that
 \begin{equation*}
  \sum_{j=1}^m \left(w^{(k_m)}_j\right)^2 = \frac{m^2}{n^2 k_m^2}\sum_{i_1,i_2=1}^n\sum_{l_1,l_2=1}^{k_m} \ind{j^{(l_1)}_{i_1}=j^{(l_2)}_{i_2}}
\end{equation*}
and that for some fixed $i_1$,$i_2$ and $l_1$, there exists exactly one $l_2 \in \llbracket 1,m \rrbracket$ such that $j^{(l_1)}_{i_1}=j^{(l_2)}_{i_2}$ as $(j_{i_2}^{(l)})_{1\leq l \leq m}$ is a permutation of $\llbracket 1,m \rrbracket$. Therefore, there exists at most one $l_2 \in \llbracket 1,k_{m} \rrbracket$ verifying this property and, consequently,
\begin{equation*}
 \sum_{j=1}^m \left(w^{(k_m)}_j\right)^2  \leq \frac{m^2}{n^2 k_m^2}\sum_{i_1,i_2=1}^n\sum_{l_1=1}^{k_m} 1= \frac{m^{2}}{k_m}.
\end{equation*}
We can then bound the second term by
\begin{equation*}
    \Exp\left[\left( \frac{1}{m}\sum_{j=1}^{m}w_{j}^{(k_m)}(\psi(X'_{j}) - \phi( f(X'_{j},\Theta_{j})) ) \right)^{2} \right]^{1/2}  \leq \frac{2}{k_m^{1/2}} \|\phi\|_{\infty},
\end{equation*} 
which converges to $0$ when $k_m$ goes to infinity.
\end{proof}

In order to complement Proposition~\ref{prop:consisnoisycase} with a rate of convergence, we restart from the decomposition~\eqref{eq:decompnoisy}. Under the additional assumptions of Corollary~\ref{corr:ratesk} with $q=2$, the same arguments as in the proof of Proposition~\ref{prop:ratenoiseless} yield
\begin{equation*}
 \Exp\left[\left( \tilde{\QI}_{m,n}^{(k_m)} - \QI\right)^{2} \right]^{1/2} = \grandO\left(\frac{1}{\sqrt{n}} + \tau_{1,d}(n)\right) + \grandO\left( \left(\frac{k_m}{m} \right)^{1/d}\right),
\end{equation*}
while we still have
\begin{equation*}
  \Exp\left[ \left| \hat{\QI}_{m,n}^{(k_m)}- \tilde{\QI}_{m,n}^{(k_m)}\right|^{2}\right]^{1/2}  = \grandO\left(\frac{1}{\sqrt{k_m}}\right)
\end{equation*}
from the proof of Proposition~\ref{prop:consisnoisycase}. As a consequence,
\begin{equation*}
  \Exp\left[\left|\QI - \hat{\QI}_{m,n}^{(k_m)}\right|^{2}\right]^{1/2} = \grandO\left(\frac{1}{\sqrt{n}} + \tau_{1,d}(n)\right) + \grandO\left( \left(\frac{k_m}{m} \right)^{1/d} + \frac{1}{\sqrt{k_m}}\right).
\end{equation*}
Optimizing in $k_m$, we get the following statement.

\begin{prop}[Rates of convergence in the noisy case]
\label{prop:ratenoisy}
  Under the assumptions of Proposition~\ref{prop:consisnoisycase} and Corollary~\ref{corr:ratesk} with $q=2$, we have
\begin{equation*}
  \Exp\left[\left|\QI - \hat{\QI}_{m,n}^{(k_m)}\right|^{2}\right]^{1/2} = \grandO\left(\frac{1}{\sqrt{n}} + \tau_{1,d}(n)\right) + \grandO\left(\frac{1}{m^{1/(d+2)}}\right)
\end{equation*}
for $k_m \sim m^{2/(d+2)}$.
\end{prop}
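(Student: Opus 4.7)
The statement is essentially a corollary of the ingredients already assembled in the discussion immediately preceding it, so my plan is to organize those ingredients cleanly and then perform the optimization in $k_m$.

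My starting point is the triangle-type decomposition~\eqref{eq:decompnoisy},
\begin{equation*}
  \hat{\QI}_{m,n}^{(k_m)} - \QI = \bigl(\hat{\QI}_{m,n}^{(k_m)} - \tilde{\QI}_{m,n}^{(k_m)}\bigr) + \bigl(\tilde{\QI}_{m,n}^{(k_m)} - \QI\bigr),
\end{equation*}
and the plan is to bound each term separately in $L^2$. For the second term, since $\psi$ is $L$-Lipschitz, Kantorovich duality~\eqref{eq:duality} gives $|\tilde{\QI}_{m,n}^{(k_m)} - \QI| \leq L\, W_1(\mu_X,\hat{\mu}_{\mathbf{X}'_m}^{(k_m)})$. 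I would then split through $\hat{\mu}_{\mathbf{X}_n}$ as in~\eqref{eq:decomperror}: Lemma~\ref{lem:W1q} provides $\Exp[W_1^2(\mu_X,\hat{\mu}_{\mathbf{X}_n})]^{1/2} = \grandO(n^{-1/2} + \tau_{1,d}(n))$, while bounding $W_1 \leq W_2$ and invoking Corollary~\ref{corr:ratesk} (with $q=2$) yields $\Exp[W_1^2(\hat{\mu}_{\mathbf{X}_n},\hat{\mu}_{\mathbf{X}'_m}^{(k_m)})]^{1/2} = \grandO((k_m/m)^{1/d})$.

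For the first term, I would reuse verbatim the computation carried out in the proof of Proposition~\ref{prop:consisnoisycase}: conditioning on $(\mathbf{X}_n,\mathbf{X}'_m)$ makes the cross-terms vanish, the boundedness of $\phi$ controls each diagonal contribution by $\|\phi\|_\infty^2$, and the key combinatorial observation $\sum_j (w_j^{(k_m)})^2 \leq m^2/k_m$ yields
\begin{equation*}
  \Exp\bigl[\bigl|\hat{\QI}_{m,n}^{(k_m)} - \tilde{\QI}_{m,n}^{(k_m)}\bigr|^2\bigr]^{1/2} = \grandO\bigl(k_m^{-1/2}\bigr).
\end{equation*}
Combining these gives
\begin{equation*}
  \Exp\bigl[|\QI - \hat{\QI}_{m,n}^{(k_m)}|^2\bigr]^{1/2} = \grandO\bigl(n^{-1/2} + \tau_{1,d}(n)\bigr) + \grandO\bigl((k_m/m)^{1/d} + k_m^{-1/2}\bigr).
\end{equation*}

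The last step is a straightforward optimization: I balance the two $k_m$-dependent terms by solving $(k_m/m)^{1/d} = k_m^{-1/2}$, which gives $k_m^{1/d + 1/2} = m^{1/d}$, hence $k_m \sim m^{2/(d+2)}$ and each term is then of order $m^{-1/(d+2)}$. This produces the announced rate. There is no real obstacle here — every estimate has been established earlier; the only point requiring a small check is that the choice $k_m \sim m^{2/(d+2)}$ indeed satisfies the hypotheses $k_m \to +\infty$ and $k_m/m \to 0$ required by Proposition~\ref{prop:consisnoisycase} and Corollary~\ref{corr:ratesk}, which is immediate since $0 < 2/(d+2) < 1$ for $d \geq 1$.
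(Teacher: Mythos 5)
Your proposal is correct and follows essentially the same route as the paper: the decomposition~\eqref{eq:decompnoisy}, the Proposition~\ref{prop:ratenoiseless}-style bound (Lipschitz continuity of $\psi$, duality~\eqref{eq:duality}, Lemma~\ref{lem:W1q} and Corollary~\ref{corr:ratesk}) for the term $\tilde{\QI}_{m,n}^{(k_m)}-\QI$, the $\grandO(k_m^{-1/2})$ bound recycled from the proof of Proposition~\ref{prop:consisnoisycase} for the other term, and the balancing choice $k_m \sim m^{2/(d+2)}$. Your added check that this choice of $k_m$ satisfies the standing hypotheses is a harmless (and welcome) detail the paper leaves implicit.
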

The loss of convergence order with respect to Proposition~\ref{prop:ratenoiseless} is similar to the NNR context,  in which it deteriorates from the rate  $1/d$ in the noiseless case to the rate of $1/(d+2)$ in the noisy case \cite[Section 14.6 and Section 15.3]{biau2015lectures}.

\section{Applications and numerical illustration}\label{sec:appli}

We present a reformulation of our results in a standard framework for $k$-NN regression in Subsection~\ref{ss:generr}, and then provide a detailed account of the original motivation of this work by decomposition-based UQ in Subsection~\ref{sec:appUQ}. Last, numerical illustrations of our main results in a simple setting are reported in Subsection~\ref{sec:num}; we refer to~\cite[Chapter 11]{touboul2021} for an application in an industrial context.

\subsection{Generalization error of \texorpdfstring{$k$}{k}-NN regression under covariate shift}\label{ss:generr}

In this subsection, we address the $k$-NN regression problem under covariate shift from the following more standard point of view: the quantity of interest is directly the regression function
\begin{equation*}
  r(x) := \Exp\left[Y|X=x\right] = \Exp\left[f(x,\Theta)\right],
\end{equation*}
and the $k$-NN estimator of $r(x)$ is defined from the training set by
\begin{equation*}
  \hat{r}_m^{(k)}(x) = \frac{1}{k}\sum_{l=1}^k Y'_{j^{(l)}(x)},
\end{equation*}
where $j^{(l)}(x)$ denotes the (smallest) index $j$ such that $X'_j = \NN^{(l)}_{\mathbf{X}'_m}(x)$. We are no longer interested in some quantity $\Exp[\phi(Y)]$ but rather in the \emph{($L^2$) generalization error} under covariate shift
\begin{equation*}
  \Exp\left[\left|r(X)-\hat{r}^{(k)}_m(X)\right|^2\right], \qquad X \sim \mu_X.
\end{equation*}
For the sake of simplicity we assume that $Y$, $r(X)$, etc. take scalar values.

\begin{theo}[$L^2$ generalization error of the $k$-NN regression under covariate shift]
  Let $\mu_X$ and $\mu_{X'}$ verify the assumptions of Theorem~\ref{theo:rates} for $q=2$. Assume in addition that $f$ is Lipschitz continuous in $x$, uniformly in $\Theta$, and that $\Var(f(x,\Theta)) \leq \sigma^2<+\infty$ for all $x \in \supp(\mu_{X'})$.
  
  When the sequence $(k_m)_{m \geq 1}$ satisfies the assumptions of Corollary~\ref{corr:ratesk}, with $k_m \sim m^{2/(d+2)}$, there exists $C \geq 0$ such that
  \begin{equation*}
  \underset{m \rightarrow + \infty}{\limsup}\  m^{1/(2+d)}\Exp\left[\left(r(X)-\hat{r}^{(k_m)}_m(X)\right)^2\right]^{1/2}  \leq \sigma +C \Exp\left[\frac{1}{p_{X'}(X)^{2/d}}\right]^{1/2}.
  \end{equation*}
  \end{theo}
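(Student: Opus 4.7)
The plan is to perform the classical bias--variance decomposition of the $k$-NN regression error, tailored to the covariate-shift framework, and then apply Corollary~\ref{corr:ratesk} to control the bias piece.

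First I would set $\epsilon_j := f(X'_j,\Theta_j) - r(X'_j)$ and rewrite
\[
r(X) - \hat{r}^{(k_m)}_m(X) = \underbrace{\frac{1}{k_m}\sum_{l=1}^{k_m}\bigl(r(X) - r(X'_{j^{(l)}(X)})\bigr)}_{=:\, A_m} \;-\; \underbrace{\frac{1}{k_m}\sum_{l=1}^{k_m}\epsilon_{j^{(l)}(X)}}_{=:\, B_m}.
\]
By the triangle inequality in $L^2$, it then suffices to control $\Exp[A_m^2]^{1/2}$ (bias) and $\Exp[B_m^2]^{1/2}$ (variance) separately.

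For the bias, the assumption that $f(\cdot,\theta)$ is $L$-Lipschitz uniformly in $\theta$ passes through expectation to yield $|r(x)-r(y)| \leq L|x-y|$. Jensen's inequality then gives
\[
A_m^2 \leq \frac{L^2}{k_m}\sum_{l=1}^{k_m} \bigl|X - \NN^{(l)}_{\mathbf{X}'_m}(X)\bigr|^2,
\]
and after taking expectations, the right-hand side is $L^2$ times exactly the upper bound analyzed in Corollary~\ref{corr:ratesk} for $q=2$. Consequently
\[
\limsup_{m \to \infty} \left(\frac{m}{k_m}\right)^{2/d}\Exp[A_m^2] \;\leq\; L^2\, c_{d,2}\,\frac{\Gamma(1+2/d)}{v_d^{2/d}}\,\Exp\left[\frac{1}{p_{X'}(X)^{2/d}}\right].
\]

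For the variance, conditioning on $(X, \mathbf{X}_n, \mathbf{X}'_m)$, the indices $j^{(1)}(X),\dots,j^{(k_m)}(X)$ are pairwise distinct and measurable, while the noise variables $\epsilon_j$ (being functions of the independent $\Theta_j$'s, themselves independent of the $X'_j$'s) are independent with conditional mean zero and variance at most $\sigma^2$. Hence
\[
\Exp\!\left[B_m^2 \,\middle|\, X, \mathbf{X}_n, \mathbf{X}'_m\right] = \frac{1}{k_m^2}\sum_{l=1}^{k_m}\Var\bigl(\epsilon_{j^{(l)}(X)} \,\big|\, X'_{j^{(l)}(X)}\bigr) \leq \frac{\sigma^2}{k_m},
\]
so that $\Exp[B_m^2]^{1/2} \leq \sigma/\sqrt{k_m}$.

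Finally, with $k_m \sim m^{2/(d+2)}$, both $(k_m/m)^{1/d}$ and $1/\sqrt{k_m}$ are of order $m^{-1/(d+2)}$, so multiplying the $L^2$ triangle inequality by $m^{1/(d+2)}$ and passing to the limsup yields the announced bound with $C = L\sqrt{c_{d,2}\,\Gamma(1+2/d)/v_d^{2/d}}$. The one delicate point is the conditional-independence argument in the variance estimate: one must verify that the random index set $\{j^{(l)}(X)\}_{l \leq k_m}$ is measurable with respect to $\sigma(X, \mathbf{X}'_m)$, so that conditioning on this $\sigma$-algebra preserves both the independence and the centering of the $\epsilon_j$'s; the rest is essentially bookkeeping of the constants supplied by Corollary~\ref{corr:ratesk}.
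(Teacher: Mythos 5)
Your proposal is correct and follows essentially the same route as the paper: the same bias--variance decomposition (your $A_m$ is exactly the paper's $r(X)-\tilde{r}^{(k_m)}_m(X)$), the same Lipschitz-plus-Jensen bound on the bias controlled by reusing the argument of Corollary~\ref{corr:ratesk} for $q=2$, and the same conditional-independence computation giving the $\sigma/\sqrt{k_m}$ noise term, with the same balancing $k_m \sim m^{2/(d+2)}$ and the same constant $C$. The only nuance, which you handle correctly, is that one invokes the intermediate bound from the \emph{proof} of Corollary~\ref{corr:ratesk} (the limit of $(m/k_m)^{2/d}\,k_m^{-1}\sum_l \Exp[|X-\NN^{(l)}_{\mathbf{X}'_m}(X)|^2]$) rather than its statement about the Wasserstein distance.
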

  We retrieve essentially the same orders of convergence as in the case without covariate shift. The quantity $\Exp[1/p_{X'}(X)^{2/d}]^{1/2}$ seems to be the relevant bound of the loss due to the use of $\mu_{X'}$ instead of $\mu_X$ and we expect that the greater this quantity is, the slower the convergence will be.
\begin{proof}
  The proof is an adaptation of  \cite[Theorem 14.5]{biau2015lectures}, using elements of the proofs of Theorem~\ref{theo:rates} and  Corollary~\ref{corr:ratesk}.  We can decompose the $L^2$ error
  \begin{align*}
    &\Exp\left[\left( r(X) - \hat{r}^{(k_m)}_{m}(X) \right)^2\right]^{1/2}\leq    \Exp\left[\left( r(X) - \tilde{r}^{(k_m)}_{m}(X) \right)^2\right]^{1/2}+   \Exp\left[\left( \tilde{r}^{(k_m)}_m(X) - \hat{r}^{(k_m)}_{m}(X) \right)^2\right]^{1/2}
  \end{align*}
  with 
  \begin{equation*}
    \tilde{r}^{(k_m)}_m(x) =  \frac{1}{k_m} \sum_{l=1}^{k_m}\Exp\left[f(\NN^{(l)}_{\mathbf{X}'_m}(x), \Theta)\right].
  \end{equation*}
  By Jensen's inequality, the first term can be bounded by
  \begin{equation*}
    \Exp\left[\left( r(X) - \tilde{r}^{(k_m)}_m(X) \right)^2\right] \leq L^2\left(\frac{1}{k_m}\sum_{l=1}^{k_m}\Exp\left[\left( X - \NN_{\mathbf{X}'_m}^{(l)}(X) \right)^2\right]\right),
  \end{equation*}
  where $L$ is the Lipschitz constant of $f$, and then following the proof of Corollary~\ref{corr:ratesk}, we get
  \begin{equation*}
    \underset{m \rightarrow + \infty}{\limsup} \left(\frac{m}{k_m} \right)^{1/d}\Exp\left[\left( r(X) - \tilde{r}^{(k_m)}_m(X) \right)^2\right]^{1/2}  \leq C\Exp\left[\frac{1}{p_{X'}(X)^{2/d}}\right]^{1/2},
  \end{equation*}
  with $C := L (c_{d,2}\Gamma(1+2/d)/v_d^{2/d})^{1/2}$. The second term is bounded by
  \begin{equation*}
  \begin{split}
    &\Exp\left[\left( \tilde{r}^{(k_m)}_m(X) - \hat{r}^{(k_m)}_m(X) \right)^{2}\right]^{1/2}\\
    & = \frac{1}{k_m}\Exp\left[\sum_{l=1}^{k_m}\left(f(\NN^{(l)}_{\mathbf{X}'_m}(X), \Theta_{j^{(l)}(X)}) - \Exp[f(\NN^{(l)}_{\mathbf{X}'_m}(X), \Theta)| X]\right)^{2}\right]^{1/2} \\
    & \leq \frac{1}{k_m^{1/2}} \sigma.
  \end{split}
  \end{equation*}
  The optimal rate is $k_m \sim m^{2/(2+d)}$, leading to
  \begin{equation*}
    \underset{m \rightarrow + \infty}{\limsup}\ m^{1/(2+d)}\Exp\left[\left( r(X) - \hat{r}^{(k_m)}_m(X) \right)^2\right]^{1/2}  \leq \sigma + C\Exp\left[\frac{1}{p_{X'}(X)^{2/d}}\right]^{1/2},
  \end{equation*}
  which completes the proof.
\end{proof}
\subsection{Application to decomposition-based UQ} \label{sec:appUQ} In the UQ context, the relation~\eqref{eq:Y} represents a computer simulation~\cite{fang2005design,de2008uncertainty}: the random variable $X$ is the \emph{input} of the simulation, the random variable $\Theta$ describes the set of its \emph{parameters}, the function $f$ is the \emph{numerical model} and the random variable $Y$ is the \emph{output} of the simulation. The function $\phi$ involved in the definition~\eqref{eq:QI} of the quantity of interest $\QI$ is the \emph{observable}.

The fact that we assume that both $X$ and $\Theta$ may be random, but with distinct sources of uncertainty (which is modeled by their statistical independence), comes from the study of uncertainty propagation in complex networks of numerical models~\cite{AllWil10, amaral2014decomposition, MarGarPer19, SanLeMCon19, MinGui}. In this context, several computer codes, representing various disciplines, are connected with each other by the fact that the outputs of certain codes are taken as inputs of other codes. Then $f$ represents one discipline, with `internal' uncertain parameters $\Theta$ whose law is known by the agent in charge of the simulation, and `external' uncertain parameters $X$ which are the output of possibly several upstream numerical simulations. Independently from our complex system context, assuming that the internal parameter $\Theta$ may be random is a standard practice to take aleatoric or epistemic uncertainty into account~\cite{de2008uncertainty,ghanem2017handbook}. 

If $X$ is deterministic then the computation of a global quantity of interest can be treated by the so-called Collaborative Optimization methods~\cite{braun1995development,yao2011review} in Multidisciplinary Analysis and Optimization. However, if $X$ is random, a direct Monte Carlo evaluation of $\QI$ is often impossible to implement in practice. Indeed, if the number of interacting disciplines is large and each code evaluation is costly, then one cannot wait for a sample $X_1, \ldots, X_n$ to be generated by the upstream simulations before starting running one's own simulation. Therefore, decomposition-based UQ methods have been introduced in the literature in order to allow disciplines to run their numerical simulations independently. Basically, these methods work in two phases. In an \emph{offline} phase, each discipline generates it own \emph{synthetic} sample $X'_1, \ldots, X'_m$ according to some user-chosen probability measure $\mu_{X'}$ on $\R^d$ (or possibly other designs of experiment). The numerical model $f$ is then evaluated on the sample $(X'_1, \Theta_1), \ldots, (X'_m,\Theta_m)$ to obtain a corresponding set of realizations $Y'_1, \ldots, Y'_m$. Once actual realizations $X_1, \ldots, X_n$ become available in a subsequent \emph{online} phase, they have to be used in combination with the synthetic sample to construct an estimator of $\QI$, but evaluations of the numerical model $f$ are no longer allowed. 

We refer to~\cite{amaral2012decomposition,amaral2014decomposition,amaral2017optimal} for examples and background on these methods. The $k$-NN reweighting scheme introduced in the present article is yet another possible approach to this problem. More general nonparametric regression methods, such as Nadaraya--Watson estimators, may also be considered. A more systematic study of such approaches, based on linear reweighting, as well as their generalization to the estimation of quantities of interest defined on a \emph{graph} of numerical models, may be found in~\cite[Chapter 11]{touboul2021} and will be the object of a future publication.

\begin{rk}[Stochastic simulators]\label{rk:stochsim}
  Our framework is also suited to the situation where $\Theta$ does not represent well-identified \emph{parameters}, but must rather be interpreted as the inherent randomness of the numerical model. In the UQ literature, such models are called \emph{stochastic simulator} (see for instance~\cite{ZhuSud21} and the references therein) and their emulation is closely related with the regression problem addressed in this article, interpreting $\Theta$ as a \emph{noise} term.
\end{rk}

\begin{rk}[A simple example with low-dimensionally supported data]\label{rk:abscont}
  In the multidisciplinary context introduced above, consider the simple setting in which a random variable $X^1$ is taken as an input by two distinct disciplines, represented by two numerical models $f^1$ and $f^2$. Assume in addition that the output $Y^1 = f^1(X^1)$ of the former is taken as an input by the latter, so that $Y^2 = f^2(X^1,Y^1)$. In the decomposition-based approach, the second discipline has to design a synthetic sample $(X'^1_j,Y'^1_j)_{j \in \llbracket 1, m\rrbracket}$ without the actual knowledge of $f^1$. Therefore it is unlikely that this sample be absolutely continuous with respect to the true law of $(X^1,Y^1)$, the support of which lies in the manifold $\{(x,f^1(x))\}$.
\end{rk}

\subsection{Numerical illustrations}\label{sec:num}

This subsection investigates numerically the influence of the choice of the synthetic distribution $\mu_{X'}$ on the quality of the respective approximations of $\mu_X$ by $\hat{\mu}^{(1)}_{\mathbf{X}'_m}$, and of $\QI$ by $\hat{\QI}^{(k_m)}_{m,n}$. 

\subsubsection{Influence of \texorpdfstring{$\mu_{X'}$}{} on the convergence of \texorpdfstring{$\hat{\mu}^{(1)}_{\mathbf{X}'_m}$}{}} \label{sec:nummes} We investigate how the relationship between $\mu_X$ and $\mu_{X'}$ impacts the convergence of $\hat{\mu}^{(1)}_{\mathbf{X}'_{m}}$ presented in Subsection~\ref{sec:discconvtheo}. In this numerical experiment, we set the dimension $d=2$, choose
\begin{equation*}
 X = (U,U),\quad U \sim \mathcal{U}\left([0,1] \right),
\end{equation*}
  and 
  \begin{equation*}
  X'
\sim \mathcal{N}\left(\begin{pmatrix} \mu \\ \mu \end{pmatrix}, \sigma^{2}\begin{pmatrix}
1 & s_{\mathrm{corr}} \\
s_{\mathrm{corr}}  & 1 \\                                                                                                                                                                                                                                                                                                                \end{pmatrix}\right),
\end{equation*}
with $\mu = 0.5$, $\sigma = 0.3$ and various $s_{\mathrm{corr}}$ in $  (-1,1)$. Intuitively, the closer $s_{\mathrm{corr}}$ is to $1$, the closer  $\mu_{X'}$ is to $\mu_{X}$, as illustrated in Figure~\ref{fig:compdist}.

\begin{figure}[!hbt]
  \begin{minipage}{0.33\textwidth}
    \centering
    \includegraphics[width=\textwidth]{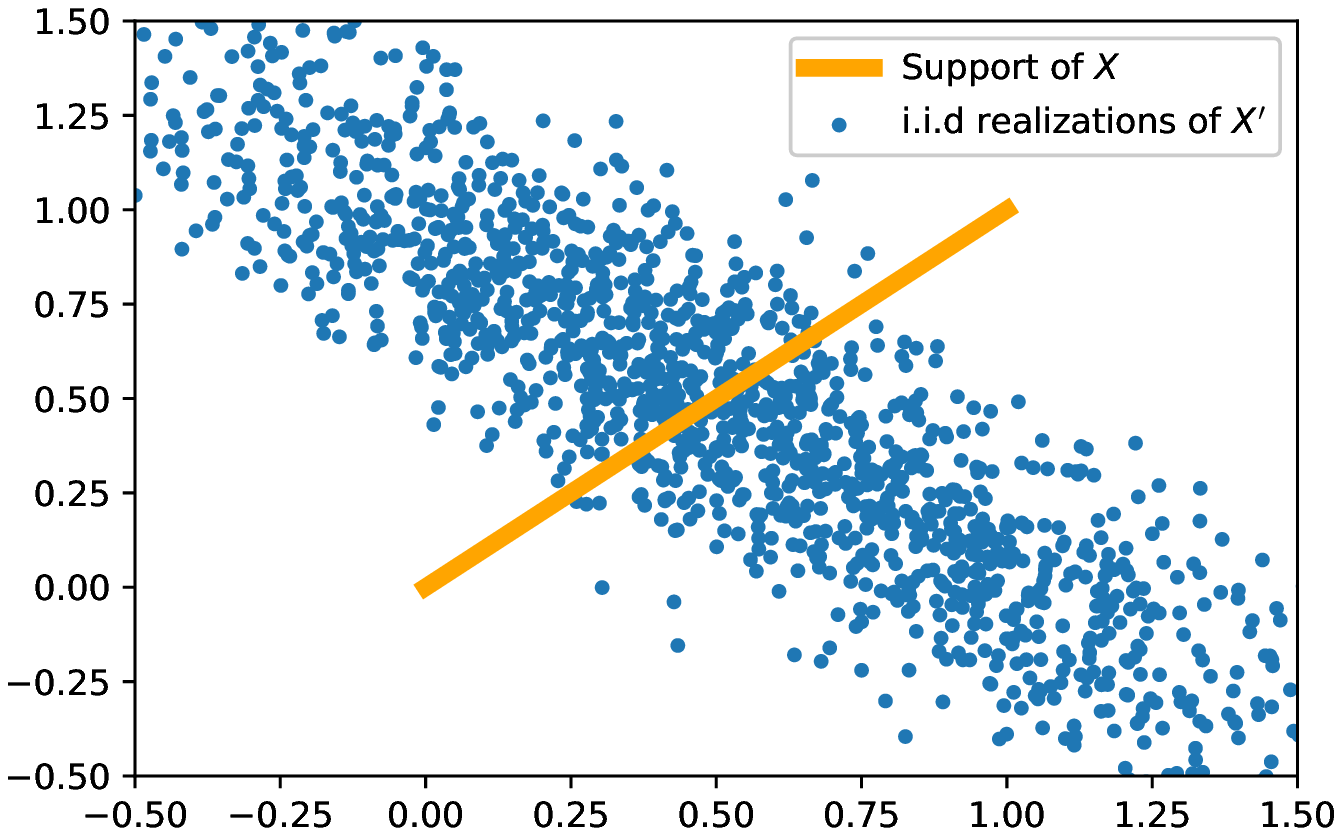}\\
    (a)~$s_{\mathrm{corr}} = -0.9$
  \end{minipage}\begin{minipage}{0.33\textwidth}
    \centering
    \includegraphics[width=\textwidth]{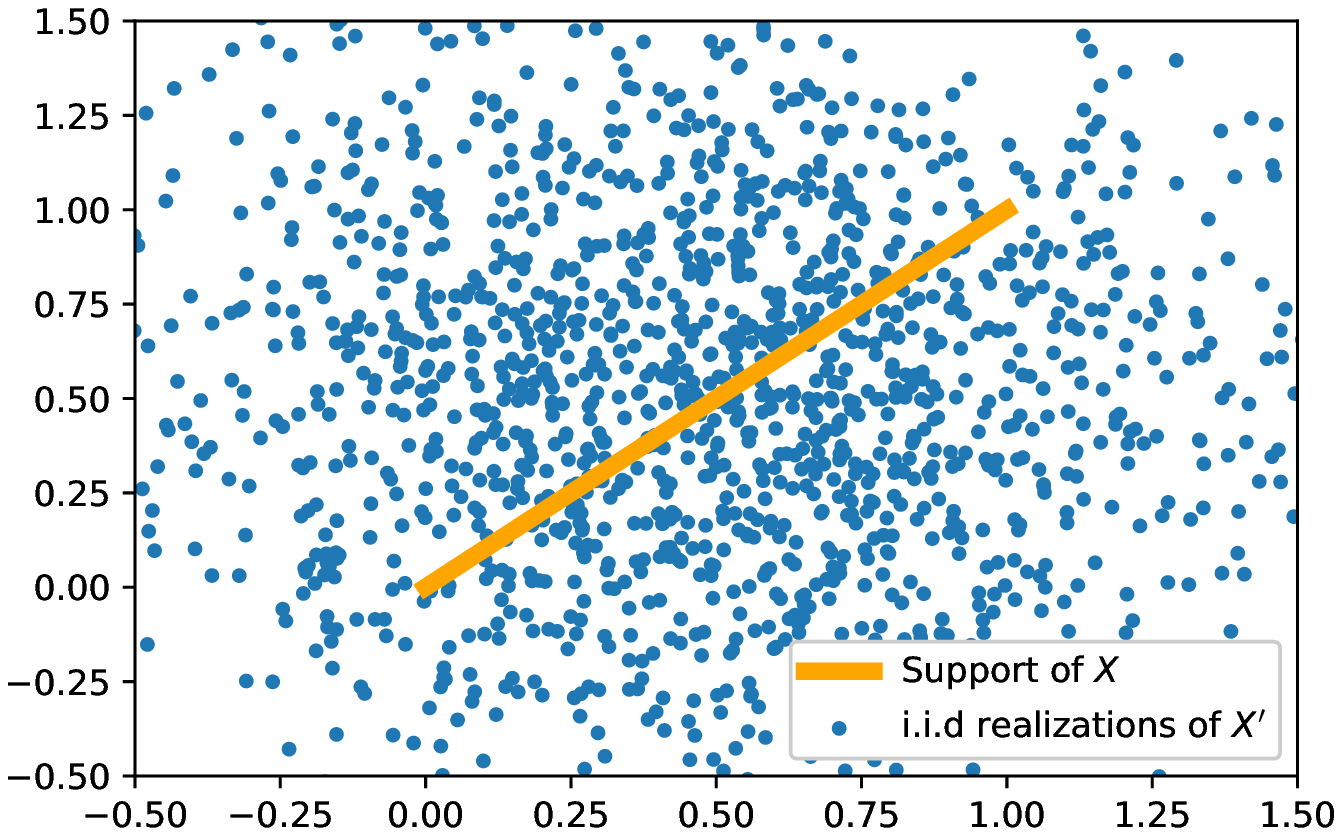}\\
    (b)~$s_{\mathrm{corr}} = 0$
  \end{minipage}\begin{minipage}{0.33\textwidth}
    \centering
    \includegraphics[width=\textwidth]{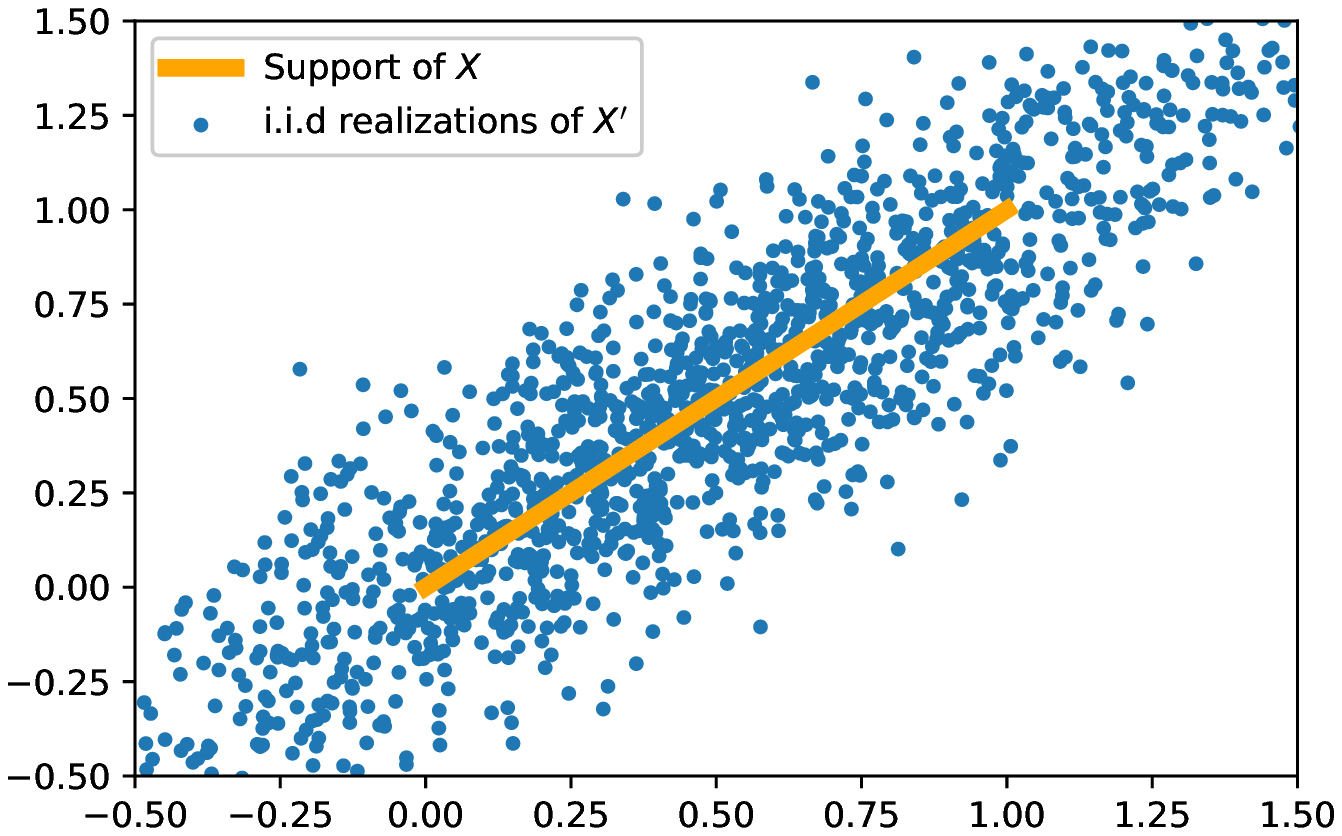}\\
    (c)~$s_{\mathrm{corr}} = 0.9$
  \end{minipage}
  \caption{Plot of the support of $X$ and $1500$ i.i.d realizations of $X'$ for different values of $s_{\mathrm{corr}}$.}\label{fig:compdist}
\end{figure}

As a first `purely visual' indication of the quality of the approximation of $\mu_X$ by $\hat{\mu}^{(1)}_{\mathbf{X}'_m}$, we plot on Figure~\ref{fig:kde} the trace of a kernel smoothing of $\hat{\mu}^{(1)}_{\mathbf{X}'_{m}}$ on the segment $\{(u,u), u \in [0,1]\}$. We can see that the greater $s_{\mathrm{corr}}$ is, the better the reconstruction looks like. From a more quantitative point of view, this observation is confirmed in Figure~\ref{fig:evolutionVal}, where we plot the evolution of $\Exp[W_{2}^{2}(\hat{\mu}_{\mathbf{X}_n}, \hat{\mu}^{(1)}_{\mathbf{X}'_{m}} ) ]$ as a function of $m$. We can see that although this quantity converges at the theoretical rate $m^{-1}$, the multiplicative constant decreases with $s_{\mathrm{corr}}$.
\begin{figure}[!hbt]  
\centering  
        \includegraphics[width=.8\textwidth]{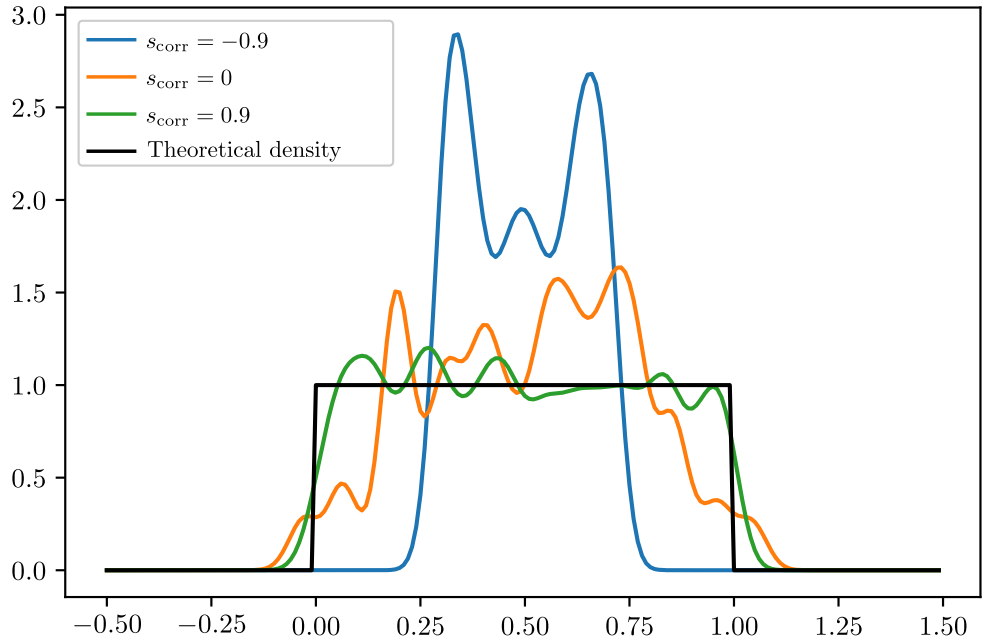}
        \caption{Trace of a kernel smoothing of $\hat{\mu}^{(1)}_{\mathbf{X}'_{m}}$ on the segment $\{(u,u), u \in [0,1]\}$ for different values of $s_{\mathrm{corr}}$.}\label{fig:kde}
\end{figure}
\begin{figure}[!hbt]
\centering  
        \includegraphics[width=.8\textwidth]{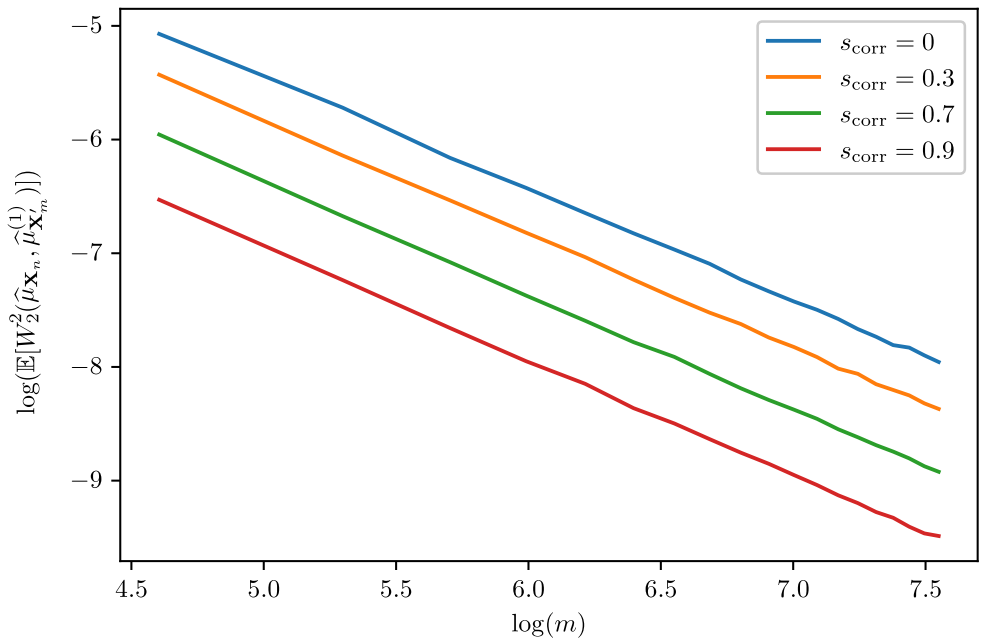}
\caption{Evolution of $\Exp [W_{2}^{2}(\hat{\mu}_{\mathbf{X}_n}, \hat{\mu}^{(1)}_{\mathbf{X}'_{m}} ) ]$ with respect to $m$, for various values of $s_{\mathrm{corr}}$. For each value of $m$ and $s_{\mathrm{corr}}$, the quantity $W_{2}^{2}(\hat{\mu}_{\mathbf{X}_n}, \hat{\mu}^{(1)}_{\mathbf{X}'_{m}} )$ is computed (with $n=100$) thanks to the formula~\eqref{eq:upbound1nn}, and its expectation is estimated through Monte Carlo average over $700$ replications. The theoretical rate of $m^{-q/d} = m^{-1}$ is attained for each experiment.}\label{fig:evolutionVal}
\end{figure}        
  
\subsubsection{Influence of \texorpdfstring{$\mu_{X'}$}{} on the convergence of \texorpdfstring{$\hat{\QI}_{m,n}^{(k_m)}$}{}}
We now concentrate on the impact on the efficiency of $\hat{\QI}_{m,n}^{(k_m)}$. We keep the framework of the previous paragraph, and we try to estimate the quantity of interest
 \begin{equation*}
  \QI =\Exp[\phi(f(X,\Theta))], \quad f((x_1,x_2),\theta) = \sin(2\pi x_1)\sin(2\pi x_2)(1+\theta)
 \end{equation*}
with $\Theta \sim \mathcal{U}(\left[-1,1 \right])$ and $\phi(y) = y$. The $L^{2}$ error  
\begin{equation*}  
\Exp\left[ \left| \hat{\QI}_{m,n}^{(k_m)}  - \QI \right|^{2} \right]^{1/2} = \Exp\left[ \left| \hat{\QI}_{m,n}^{(k_m)}  - 0.5\right|^{2} \right]^{1/2}
\end{equation*}
is computed by Monte Carlo estimation. As highlighted in Figure~\ref{fig:L2err}, the closeness of $\mu_X$ to $\mu_{X'}$ is an important factor for the efficiency of the estimator.   
 
\begin{figure}[!hbt] 
\centering  
 \includegraphics[width=.8\textwidth]{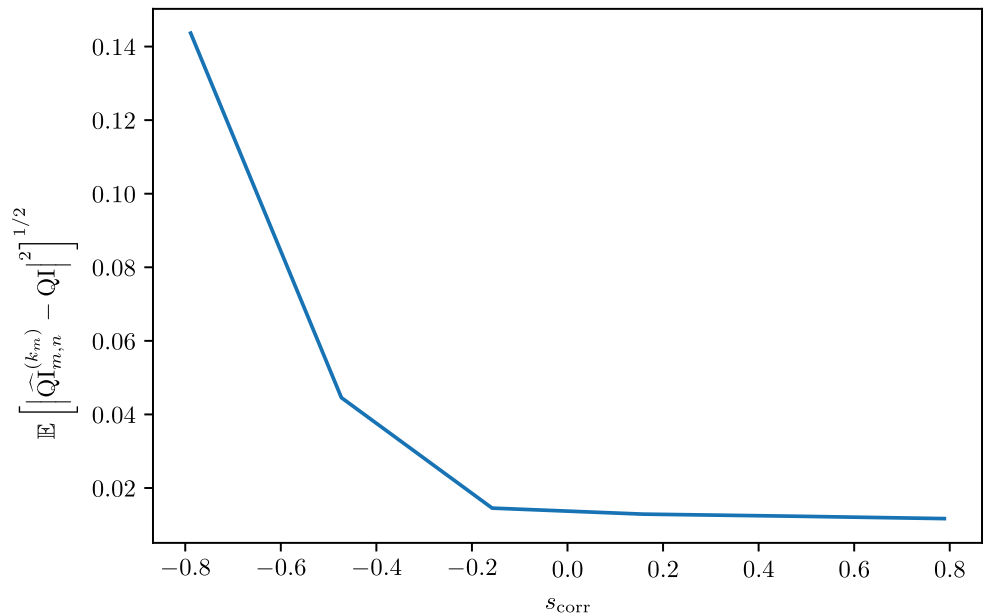} 
 \caption{Estimation of the $L^{2}$ error with respect to $s_{\mathrm{corr}}$ for $n=m = 900$ and $k_{m}=4$, averaged on $2000$ replications.}\label{fig:L2err}
\end{figure}

\subsection*{Acknowledgements} This work was motivated by a collaboration with P. Benjamin, F. Mangeant and M. Yagoubi. We also benefited from fruitful discussions with G. Biau and A. Guyader. Last, we thank two anonymous referees for their careful reading of the article, and their numerous suggestions which allowed to greatly improve the presentation of this work.
 


\end{document}